\let\csname equation*\endcsname\relax
\let\csname endequation*\endcsname\relax
\numberwithin{equation}{section}
\numberwithin{equation}{section}
\newtheorem{theorem}{Theorem}[section]
\newtheorem{lemma}{Lemma}[section]
\newtheorem{remark}{Remark}
\newtheorem{corollary}[lemma]{Corollary}
\newcommand{\R}{\mathbb{R}}
\newcommand{\RR}{\mathbb{R}}
\newcommand{\rank}{\mathrm{rank}}
\newcommand{\lam}{\lambda}
\newcommand{\Lam}{\Lambda}
\newcommand{\diag}{\mathrm{diag}}
\newcommand{\lb}{\langle}
\newcommand{\rb}{\rangle}
\newcommand{\bdy}{\partial}
\newcommand{\tri}{\triangle}
\newcommand{\de}{\delta}
\newcommand{\De}{\Delta}
\newcommand{\til}[1]{\tilde{#1}}
\newcommand{\tbdy}{\til{\bdy}}
\newcommand{\tlam}{\til{\lam}}
\newcommand{\tLam}{\tilde{\Lam}}
\newcommand{\tL}{\widetilde{\mathcal{L}}}
\newcommand{\tu}{\til{u}}
\newcommand{\tK}{\widetilde{\mathcal{K}}}
\newcommand{\tW}{\widetilde{\mathcal{W}}}
\newcommand{\tg}{\til{g}}
\newcommand{\PP}{\mathcal{P}}
\newcommand{\II}{\mathcal{I}}
\newcommand{\td}{\til{d}}
\newcommand{\tv}{\til{v}}
\newcommand{\JJ}{\mathcal{J}}
\newcommand{\dw}{\dot{w}}
\newcommand{\dz}{\dot{z}}
\newcommand{\du}{\dot{u}}
\newcommand{\dv}{\dot{v}}
\newcommand{\dtu}{\dot{\tu}}
\newcommand{\dtv}{\dot{\tv}}
\newcommand{\drho}{\dot{\rho}}
\newcommand{\deta}{\dot{\eta}}
\newcommand{\gam}{\gamma}
\newcommand{\Ra}{\operatorname*{Ra}}
\newcommand{\bv}[1]{\mathbf{#1}}
\newcommand{\innerp}[2]{\langle #1, #2 \rangle}
\newcommand{\p}{\partial}
\newcommand{\del}{\nabla}
\newcommand{\add}[1]{#1}
\begin{document}

\title[Relaxation-based schemes for on-the-fly parameter estimation in dissipative systems]{Relaxation-based schemes for on-the-fly parameter estimation in dissipative dynamical systems}

\author{Vincent R. Martinez$^{1,2}$, Jacob Murri$^3$, Jared P. Whitehead$^4$ }

\address{$^1$Department of Mathematics \& Statistics, Hunter College, City University of New York, New York, NY 10065, USA}
\address{$^2$Department of Mathematics, Graduate Center, City University of New York, New York, NY 10016, USA}
\address{$^3$Department of Mathematics, University of California--Los Angeles, Los Angeles, CA 90031, USA}
\address{$^4$Mathematics Department, Brigham Young University, Provo, UT 84602, USA}

\ead{vrmartinez@hunter.cuny.edu}
\ead{jwmurri@math.ucla.edu}
\ead{whitehead@mathematics.byu.edu}

\vspace{10pt}
\begin{indented}
\item[]\today
\end{indented}

\begin{abstract}
\add{This article studies two particular algorithms, a Relaxation Least Squares (RLS) algorithm and a Relaxation Newton Iteration (RNI) scheme , for reconstructing unknown parameters in dissipative dynamical systems. Both algorithms are based on a continuous data assimilation (CDA) algorithm for state reconstruction of A. Azouani, E. Olson, and E.S. Titi \cite{Azouani_Olson_Titi_2014}. Due to the CDA origins of these parameter recovery algorithms, these schemes provide on-the-fly reconstruction, that is, as data is collected, of unknown state and parameters simultaneously. It is shown how both algorithms give way to a robust general framework for simultaneous state and parameter estimation. In particular, we develop a general theory, applicable to a large class of dissipative dynamical systems, which identifies structural and algorithmic conditions under which the proposed algorithms achieve reconstruction of the true parameters. The algorithms are implemented on a high-dimensional two-layer Lorenz 96 model, where the theoretical conditions of the general framework are explicitly verifiable. They are also implemented on the two-dimensional Rayleigh-B\'enard convection system to demonstrate the applicability of the algorithms beyond the finite-dimensional setting. In each case, systematic numerical experiments are carried out probing the efficacy of the proposed algorithms, in addition to the apparent benefits and drawbacks between them.}
\end{abstract}

\noindent{{\bf Keywords.} \it parameter estimation, continuous data assimilation, two-layer Lorenz 96 model, Rayleigh-B\'enard convection, dynamical Newton iteration, dynamical least squares, Newtonian relaxation, nudging}
\maketitle

\section{Introduction}
\add{This central focus of this article is to carry out an in-depth study of two recent algorithms for estimating unknown parameters of dissipative dynamical systems, one introduced by B. Pachev, J.P. Whitehead, and S. McQuarrie in \cite{pachev2022concurrent}, and another by E. Carlson, J. Hudson, and A. Larios \cite{carlson2020parameter}. Both algorithms are closely related, having been based on the same feedback-control  algorithm for continuous data assimilation \cite{Azouani_Olson_Titi_2014}. However, the methodology of their derivations were different: the former rooted in a control-theoretic perspective, and the latter rooted in an ad-hoc heuristic accounting for leading-order errors. In this article, we provide principled derivations of both algorithms and show that the ad-hoc algorithm introduced in \cite{carlson2020parameter} can in fact be viewed as a Newton-type iteration scheme that we call the \textit{relaxation Newton iteration} (RNI) algorithm, while the algorithm introduced in \cite{pachev2022concurrent} and modified in \cite{murri2022} gives way to what we refer to as a \textit{relaxation least squares} (RLS) algorithm. We identify general conditions under which these algorithms for multi-parameter recovery are guaranteed to converge for a large class of finite-dimensional systems. Although we establish a rigorous convergence result only for the finite-dimensional case, we emphasize that both algorithms are applicable to the infinite-dimensional case of systems of partial differential equations. Indeed, we conclude the paper by carrying out a systematic numerical comparison of both the RLS and RNI algorithms in the context of the two-layer Lorenz 96 system, as well as the Rayleigh-B\'enard convection system, which is based on the Boussinesq approximation of the Navier-Stokes equations for buoyancy-driven incompressible fluids.}

Both the RNI and  RLS algorithms developed here are capable of recovering any finite number of parameters that form the coefficients of linear operators so long as the number of observed degrees of freedom is greater than the number of unknown parameters and certain identifiability-type criteria are satisfied. Indeed, the main theoretical contribution of this paper is to provide a complete and rigorous justification of the convergence properties of both algorithms in the context of finite dimensional systems (see \cref{thm:RNI} and \cref{thm:RLS}). 
In this setting, we apply our general theory to the two-layer Lorenz `96 model (see \cite{lorenz1996predictability}
). We verify that the system satisfies the required hypotheses of \cref{thm:RNI} and \cref{thm:RLS} and computationally implement both algorithms for this model. In the two-layer Lorenz `96 model, we can recover up to $40$ unknown parameters and over $200$ unknown state variables simultaneously while observing only $40$ components of the state.

Although our theoretical result is restricted to finite dimensions, this restriction is only pertinent to the rigorous mathematical study of the general framework. We emphasize that in practice it applies equally well to the infinite dimensional setting of models given by partial differential equations (PDEs). To clarify this, we include a numerical study of the effectiveness of the RLS and RNI algorithms applied to the 2D Rayleigh-B\'enard convection (RBC) system \cite{rayleigh1916lix} in order to recover unknown Rayleigh and Prandtl numbers. Here, we are able to demonstrate, at least computationally, that the algorithm performs exceedingly well in this case, recovering both the state and the unknown Rayleigh and Prandtl numbers with only partial observations of the state.  Further investigations of this case along with rigorous justification for this particular PDE will be performed in a future study.

One common misconception is that because numerical simulation of a PDE requires one to discretize its domain, thereby reducing the PDE (in practice) to a finite-dimensional \textit{discretized model}, any algorithms which are theoretically justified in the finite-dimensional setting are also justified for the PDE. While the application of the algorithm to the \textit{discretized model} would certainly be justified by the theory in this scenario, the theory \textit{does not} justify applying the algorithm to the full model, which is an additional step that must be carried out. Furthermore, this step is decidedly non-trivial as it requires one to develop analysis that is ultimately \textit{independent of the system dimension}. Although we believe it is possible to reasonably extend \cref{thm:RLS} and/or \cref{thm:RNI} to infinite dimensional settings, it is beyond the scope of the current work. Remarkably, this analysis can nevertheless be carried out in certain cases; we refer the reader to \cite{martinez2022convergence}, where such an analysis is carried out for a modified version of RNI in the setting of the externally-driven 2D Navier-Stokes equations for incompressible fluids. In future work, the rigorous convergence analysis of RLS will be borne out in the case of 2D RBC. We simply point out that not only is our framework applicable to PDE models, but the proof itself provides a blueprint for mathematically justifying its applicability for a large class of PDE models as well.

\subsection{Connection to Continuous Data Assimilation}
Both the RNI and RLS algorithms are built on continuous data assimilation (CDA) first introduced in \cite{Azouani_Olson_Titi_2014} applied to the 2D Navier-Stokes equations, and further expanded to several different settings in  \cite{BeOlTi2015,BiHuLaPe2018,JoMaTi2017,larios2023application} for example.  The basic idea of CDA is to introduce a feedback control mechanism (commonly referred to as `nudging') that pushes the dynamical system toward the observed state of the system.  The use of nudging as a data assimilation technique is classical in the context of finite-dimensional systems of ordinary differential equations. The innovation in \cite{azouani2014continuous} was to recognize how various observations, especially in the form of local spatial averages or nodal values, for which direct substitution into the state equation may not be possible, could reliably be assimilated into a \textit{partial differential equation} for the purposes of state reconstruction. Rather than relying on direct insertion, in \cite{azouani2014continuous} the observations are suitably interpolated into the phase space of the system and inserted as an exogenous term that serves to drive the state towards that of the observations. Further extension to different observable variables (other than the velocity in 2D turbulence) is carried out in \cite{FaJoJoTi2017,FarhatJollyTiti2015,farhat2016abridged,farhat2016charney} and different types of observation operators and/or nudging mechanisms are studied in \cite{BiBrJo2020,biswas2021mesh,franz2022bleeps,LaPe2018}.

The algorithm subsequently defined by this procedure \cite{azouani2014continuous} is characterized by a coupled system of dynamical evolution equations which we may generally denote by
\begin{align}\label{eq:CDA_truth}
    \frac{\partial u}{\partial t} &= G(u),\\ \label{eq:CDA_nudge}
    \frac{\partial \tu}{\partial t} &= G(\tu) - \mu I_h(\tu-u),
\end{align}
where (\ref{eq:CDA_truth}) governs the evolution of the \add{reference} 
state, that is, the representation of reality, and (\ref{eq:CDA_nudge}) governs the evolution of our modeled state.  The precise form of the operator $G(u)$ defines which system we are considering (such as the Navier-Stokes equations in which case $u$ is a 2 or 3 dimensional vector field and $G$ is a partial differential operator), while the operator $I_h$ represents a projection onto the observable components of the state, where $h$ characterizes the spatial resolution of the observations. In this sense, $I_h(u(t))$ precisely models the portion of the solution that is observed at the instant $t$ and at length scale $h$.  The parameter $\mu$ represents the inverse time scale at which the modeled state relaxes towards the observed state. \add{The thrust of the work \cite{azouani2014continuous} is that as long as sufficiently many length scales of the motion are observed, then $\mu$ can be tuned so that the modeled state in fact relaxes towards the \textit{full, true state}. In other words, it suffices to enforce relaxation of the system towards the observed finite dimensional state in order to recover the full infinite dimensional state.}

In \cite{azouani2014continuous} and \add{many} later works, the dissipative nature of certain dynamical systems is used to prove that the solution $\tu$ of (\ref{eq:CDA_nudge}) will converge exponentially in time to $u$, the solution of (\ref{eq:CDA_truth}).  These results 
\add{impose} certain conditions be met on the relaxation time scale (set by the parameter $\mu$) and the resolution of the observations (described by $h$) and although the arguments are specific to the choice of $G(u)$ (the choice of system under consideration), the methods are relatively easily adapted to other dissipative systems. CDA is further adapted to the case of an imperfect model in \cite{carlson2020parameter,FarhatGlattHoltzMartinezMcQuarrieWhitehead2020} where it is also noted that the inherent error observable in the state, can be used to give updated parameters for the model (see \cite{carlson2020parameter,FarhatLariosMartinezWhitehead2024} as well).

\subsection{Other approaches to parameter recovery/equation discovery}
While data driven methods such as deep neural networks \cite{duch1994neural} and even Physics-Informed Neural Networks (PINNs), see \cite{cai2021heat,cai2021physics,iten2020discovering} for example, have provided valuable recent advances in the simulation and modeling of physical systems, they have not replaced the need for a physically motivated model whose parameter values are interpretable.  For this reason, methods of equation discovery such as the sparse identification of nonlinear dynamical systems (SINDy) \cite{brunton2016discovering} have been developed in the past decade which not only provide an accurate simulation of the system in question, but also provide formulaic expressions that can be used for further extrapolation of the existing data.
Similar approaches to SINDy have been successfully used to identify additional terms in the equations of motion for fluid mechanics that are thought to best model subgrid scale interactions (see \cite{chattopadhyay2023long,jakhar2023learning} for example), and identification of reduced-order models \cite{geelen2023operator,mcquarrie2023nonintrusive}.  Similar questions are addressed via the use of information geometry \cite{mattingly2018maximizing,quinn2022information}, or even using Bayesian approaches \cite{BoGlKr2020,DaSt2016,mark2018bayesian} that produce detailed estimates on the relevant uncertainty but come at a very high computational cost.

Similar questions arise in the identification of reduced order models, and the methods developed there can similarly be referred to as equation discovery or parameter recovery.  See \cite{mcquarrie2021data,guo2022bayesian,kramer2024learning,fries2022lasdi,conti2024multi,franco2023deep} for a few examples of these efforts wherein a combination of statistical and machine learning methods are used to develop reduced order models that can be used at a fraction of the cost of fully developed physically constrained models.  While our discussion in this paper is related to identifying an unknown parameter for which there is a `true value`, we note that our setup could be used to generate optimal parameters of a parameterized operator that is meant to imitate the dynamics of interest in the system of choice. In other words, although rigorous justification is not available for the application of either RLS and/or RNI to reduced order modeling, it does provide a viable alternative so long as the observable data is approximately continuous in time.

A key distinction between the methods mentioned in the preceding paragraph and those studied in this article, is that both the RLS and RNI algorithms estimate the unknown parameters in an \textit{on-the-fly} manner, that is theier values are ascertained concurrent to the recovery of the state of the system without the need of a costly ensemble or set of Monte Carlo simulations. The methods, thus, stand to achieve significant reductions in computational cost. However, because the methods discussed here rely on CDA, they assume nearly continuous observations of the state in time, which may preclude their application to certain situations.  Lastly, we point out that the theoretical and numerical results that we obtain are developed in the absence of noise. Nevertheless, we expect analogous results to  hold (up to the level of the noise) in the presence of stochastic errors; such studies are reserved for a future work.

\subsection{Organization of the Paper} The rest of this paper will proceed as follows.  \cref{sect:algorithms} provides an intuitive derivation for both algorithms as well as rigorous justification in finite dimensions.  \cref{sect:L96} provides the necessary prerequisite rigorous estimates on the two-layer Lorenz 96 model for the convergence to be justified, and presents numerical simulations that indicate how these algorithms work for this model.  \cref{sect:RBC} includes the modification of both algorithms for the 2D Rayleigh-B\'enard system and some numerical results indicating the validity of both algorithms in this case, and \cref{sect:conc} provides some brief conclusions and outlook for future work.

\section{Dynamical parameter recovery via ``Relaxation"}\label{sect:algorithms}

One of the main insights of this article is to identify a principled derivation of the algorithm that was introduced in \cite{carlson2020parameter} for reconstructing unknown parameters in a nonlinear dynamical system to develop an alternative perspective of a similar algorithm, originally studied in \cite{pachev2022concurrent} and developed further in \cite{FarhatLariosMartinezWhitehead2024} in a different setting, that ultimately expand on the domain and scope of their applicability. 

We will first describe a new heuristic derivation of RNI in a general finite dimensional setting for multiple parameters.
The original derivation \cite{carlson2020parameter} of the RNI algorithm was carried out in an ad-hoc manner, dropping terms that were observed to be of higher-order from the results of numerical experiments. Remarkably, the resulting algorithm had uncannily taken on the form of a Newton iteration scheme, though it was not identified as such then, and robustly achieved reconstruction up to machine precision in a dynamically rich regime for the underlying 2D NSE system.
Thus, our first main goal is to present a more principled derivation of the RNI algorithm. Our derivation highlights the role of the sensitivity equation related to the associated state error and makes definitively clear how and why the algorithm is indeed a form of Newton iteration.

We then expand upon the derivation of what is coined in this paper as the RLS algorithm, which was originally introduced in \cite{pachev2022concurrent} in a basis-dependent form and derived by enforcing a form of null-controllability for the state error. The later work \cite{murri2022} observed that a judicious choice of basis can be obtained by solving a least squares problem, which subsequently gave way to improved numerical performance.  In this work, we formalize the observation in \cite{murri2022} and show that the algorithm from \cite{pachev2022concurrent} is, in fact, a dynamical form of the least-squares method that exploits observations via a Newton relaxation, i.e., nudging, thus justifying its namesake.
Through these new perspectives, we develop a rigorously justifiable general framework for applying both RNI and RLS (see \cref{thm:RNI} and \cref{thm:RLS}) that simultaneously clarifies the relation between the two algorithms.

\subsection{Standing Assumptions} For the remainder of the section, we will assume the following: the reference state is governed by the system
    \begin{align}\label{eq:reference_system}
        \frac{d u}{d t} = \sum_{k=1}^p\lam_k L_k u + F(u),
    \end{align}
where $\lam_k\in\R$, for $k=1,\dots, p$, and $p\geq 1$, where we assume \textit{all variables and parameters have been non-dimensionalized}. For convenience, we restrict ourselves to the setting of finite-dimensional systems, where the solution $u$ satisfies $u:[0,\infty)\rightarrow \mathbb{R}^d$ with $d\geq1$. We moreover assume that $p\leq d$. 

Let us also assume that each of the $L_k:\mathbb{R}^d\rightarrow \mathbb{R}^d$ are linear operators and $F:\mathbb{R}^d\rightarrow\mathbb{R}^d$ is a locally Lipschitz vector field. Note that this finite-dimensional setting is appropriate for the finite-dimensional discretizations of PDEs with polynomial nonlinearity $F$. We will assume in all that follows that given any $\Lam=(\lam_1,\dots,\lam_p)$, there exists a bounded open set $\PP(\Lam)\subset\RR^p$ containing $\Lam$ such that, for all $\tLam\in\PP$, the system \eqref{eq:reference_system} corresponding to $\tLam$ is globally well-posed and dissipative, so that it possesses a well-defined semigroup on $[0,\infty)$ and a global attractor for its dynamics exists. \add{Moreover, the absorbing ball corresponding to \eqref{eq:reference_system} has radius which is uniform on compact subsets of $\PP(\Lam)$. We refer to the set $\PP(\Lam)$ as the \textit{set of admissible parameters}.} 

Observations of \eqref{eq:reference_system} are \add{assumed to be} given \add{in the form of a} continuous time series $\{I_hu(t;u_0)\}_{t\geq0}$, where $I_h$ is a projection operator parameterized by $h$ such that $h^{-1} :=\rank(I_h)\leq d$. Note that if $h=1/d$, then $I_h$ has full rank. Then the problem of parameter estimation for \eqref{eq:reference_system} is the following: 
    \begin{quotation}
    \textbf{Problem.} Determine $\lam_1,\dots,\lam_p$ from knowledge of the time-series $\{I_hu(t;u_0)\}_{t\geq0}$ and the fact that $u$ obeys \eqref{eq:reference_system}, but where the initial condition $u_0$ is unknown.
    \end{quotation}
\add{We will now present the two algorithms, RNI and RLS, to address this problem.}

\subsection{Derivation \& Convergence Analysis of the RNI algorithm}\label{sect:RNI}

The RNI algorithm relies on the use of the so-called nudged system \eqref{eq:CDA_nudge} with  \textit{proxy parameter values}, $\tlam_1,\dots,\tlam_p$:
    \begin{align}\label{eq:nudge_system}
        \frac{d\tu}{dt}=\sum_{k=1}^p\tlam_kL_k\tu+F(\tu)-MI_h\tu+MI_hu,
    \end{align}
    where $M$ is a symmetric, positive definite matrix; it is the generalization of the scalar relaxation parameter $\mu$. In practice, $M$ is typically taken to be a diagonal matrix.
The algorithm in \cite{carlson2020parameter} leverages \eqref{eq:nudge_system} in order to  reconstruct the unobserved portion of the state, $(I-I_h)\tu$, \add{in order to propose a new value of the unknown parameter value. We show in this section how the algorithm proposed in \cite{carlson2020parameter} can be realized as a type of Newton iteration scheme. We then prove a theorem identifying sufficient conditions under which the algorithm can be guaranteed to converge. In a later section, we numerically test the RNI algorithm (see \cref{sect:L96} and \cref{sect:RBC}).} 

\subsubsection{Derivation of the RNI algorithm}

Denote the state error and model error for the $k$th parameter, respectively, by
    \begin{align}\label{def:state:model:error}
        w=\tu-u,\qquad\tri\lam_k=\til{\lam}_k-\lam_k.
    \end{align}
\add{Then $w$ obeys the following evolution equation:}
    \begin{align}\label{eq:state:error}
        \frac{dw}{dt}=\sum_{k=1}^p\left(\tri\lam_k L_k\tu+\lam_k L_kw\right)+\left(F(\tu)-F(u)\right)-MI_hw
    \end{align}
\add{It follows that the observed state error, $I_hw$, is then governed by}
    \begin{align}\label{eq:observed_state_error}
        \frac{d}{dt}I_hw &= \frac{d}{dt} I_h\tu - \frac{d}{dt} I_h u\notag
        \\
        &= \sum_{k=1}^p\left(\tri\lam_kI_hL_k\tu+\lam_kI_hL_kw\right)+\left(I_hF(\tu)-I_hF(u)\right)-I_hMI_hw.
    \end{align}
\add{Further observe that $I_hF(\tu)-   I_hF(u)=I_hF(w+u)-I_hF(u)=:\de_wF_h(u)$, where $\de_w$ denotes the finite difference operator with increment $w$. Thus}
        \begin{align}\label{eq:observed_state_error_mvt}
            \frac{d}{dt}I_hw 
            &= \sum_{k=1}^p\left(\tri\lam_kI_hL_k\tu+   \lam_kI_hL_kw\right)+\de_wF_h(u)-I_hMI_hw.
        \end{align}
 \add{Upon pairing \eqref{eq:observed_state_error_mvt} with $I_hw$, we obtain}
    \begin{align}\label{eq:Pw}
        \|\sqrt{I_hM}I_hw\|^2&=-\frac{1}2\frac{d}{dt}\|I_hw\|^2+\lb \de_wF_h(u),I_hw\rb\notag
        \\
        &\quad\hspace{1.5pt}+\sum_{k=1}^p\left(\tri\lam_k\lb  L_k\tu,I_hw\rb+\lam_k\lb L_kw,I_hw\rb\right).
    \end{align}

\add{For each $k=1,\dots, p$, consider the mapping}
    \begin{align}\label{def:Ej}
         E_k(\tLam)=\frac{1}2\|\sqrt{I_hM}I_hw(\tLam)\|^2.
    \end{align}
Then let $E$ denote the vector
    \begin{align}\label{def:E}
        E(\tLam)=(E_1(\tLam),\dots, E_p(\tLam)).
    \end{align}
Note that $E_i=E_j$, for all $i,j=1,\dots, p$; i.e. $E$ is a constant vector (we represent $E(\tLam)$ in this way to be consistent with the presentation of the RLS algorithm in \cref{sect:RLS} below). \add{We propose to approximate the unknown parameter vector, $\Lam$, by approximating zeroes of $E(\tLam)$. Although, the true parameter vector, $\Lam$, may not necessarily be a zero of $E$, the key insight is that as observations are continuously made, the zero set of $E$ will eventually collapse into a singleton set consisting of the true parameter $\Lam$.}

\add{For each $k=1,\dots, p$, denote the $k$-th partial with respect to $\tLam$ by $\tbdy_k$. 
Then for each $k=1,\dots, p$, the first-order approximation to $E(\tLam)$ is given by}
    \begin{align}\notag
        \eta_{k}=\tbdy_kE_k(\tLam)(\lam_k-\tlam_k)+E_k(\tLam).
    \end{align}
\add{By Newton's method, given a proxy parameter vector, $\tLam$, for the true parameter vector, a new approximation for a zero of $E(\tLam)$ can be found by solving the linear equation, i.e., set $\eta=(\eta_1,\dots, \eta_p)=0$:}
    \begin{align}\label{eq:approx:zero}
        \diag(\tbdy_1E_1(\tLam),\dots,\tbdy_pE_p(\tLam))(\tLam-\Lam)={E(\tLam)}.
    \end{align}
Note the similarity between this approach and the optimization framework introduced in \cite{newey2024}.
    
\add{Now observe that by direct calculation in \eqref{def:Ej}, we have}
    \begin{align}\label{eq:tbdyjE}
        \tbdy_kE_k(\tLam)&=\lb I_hMI_hw(\tLam),\tbdy_kI_hw(\tLam)\rb
    \end{align}
\add{Thus, upon applying the derivative, $\tbdy_k$, in \eqref{eq:Pw}, and making use of the orthogonality of the projection $I_h$, we obtain}
    \begin{align}
        &2\tbdy_kE_k
        =-\lb\bdy_tI_hw,\tbdy_kw\rb-\lb I_hw,\bdy_t\tbdy_kw\rb\notag
        \\
        &\quad+\lb (\de_wDF_h(u))\tbdy_k(w+u),I_hw\rb+\lb DF_h(u)\tbdy_kw,I_hw\rb+\lb \de_wF_h(u),\tbdy_kI_hw\rb\notag
        \\
        &\quad+\lb L_k\tu,I_hw\rb+\lb \tbdy_kL_kw,I_hw\rb+\lb L_kw,\tbdy_kI_hw\rb\notag
        \\
        &\quad +\sum_{\ell=1}^p\tri\lam_\ell\left(\lb L_\ell\tbdy_k\tu,I_hw\rb+\lb L_\ell\tu,\tbdy_kI_hw\rb\right)\label{eq:sensitivity}
    \end{align}
\add{After passing a transient period in $t$, upon invoking analytic dependence in parameters, if $|\tLam-\Lam|\ll1$, then $w(t,\tLam)\approx0$ and $\tbdy_kw(t,\tLam)\approx0$.  This is similar yet distinct from the approach taken in \cite{newey2024} where large scale asymptotics in the nudging parameter $\mu$ (akin to the eigenvalues of $M$) are used to yield the alogrithmic update.  Since the smallest eigenvalues of $M$, may in general have large magnitude, we deduce that to leading order in the state and model  error, we have the following:}
    \begin{align}\notag
        \tbdy_kE_k&\approx \tL_{kk},\quad \tL_{kj}(\tu):=\frac{\de_{kj}}2\lb L_j\tu,I_hw\rb,
    \end{align}
\add{where $\de_{jk}$ denotes the Kronecker delta. Returning to \eqref{eq:approx:zero}, \eqref{eq:tbdyjE}, and making use of the vector notation, we finally arrive at the following approximate relation:}
    \begin{align}\label{eq:RNI:approx}
        \tL(\Lam-\tLam)\approx -E(\tLam).
    \end{align}
\add{Thus, if $\tL$ is invertible, one finally arrives at}
    \begin{align}\notag
        \Lam\approx \tLam -\tL^{-1}E(\tLam).
    \end{align}
\add{This approximate relation produces the following iteration scheme, which we refer to as the \textit{Relaxation Newton Iteration} scheme:}
    \begin{align}\label{def:RNI}
        \Lam^{(n+1)}=\Lam^{(n)}-(\tL^{(n)})^{-1}E(\Lam^{(n)}), \quad \add{\tL^{(n)}_{kj}:=\tL_{kj}(\tu^{(n)})=\frac{\de_{kj}}2\lb L_{j}\tu^{(n)},I_hw^{(n)}\rb},
    \end{align}
\add{where $\tu^{(n)} = \tu(t_{n+1};\tu^{(n-1)},\Lam^{(n)})$ represents the solution of \eqref{eq:nudge_system} corresponding to choice of parameters $\Lam^{(n)}$, after time $t_{n+1}-t_n$ with initial condition $\tu^{(n-1)}$, $I_hw^{(n)} = I_h\tu^{(n)}-I_hu^{(n)}$, and $\tL^{(n)}$ is defined in \eqref{def:RNI}. Note that the solvability condition for \eqref{eq:RNI:approx} is simply that $\prod_{k=1}^p\lb L_k\tu^{(n)},I_hw^{(n)}\rb\neq0$, for all $n$. Although this may be difficult to verify a priori, this is easy to check in practice, and in the course of its implementation, whenever this condition is violated, one may simply wait to evaluate the formula at a more fortuitous time in the future.}

\add{The algorithm defined by \eqref{def:RNI} is a generalization of the algorithm originally derived in \cite{carlson2022} for estimating three different parameters in the Lorenz 63 model, to the case of arbitrarily many parameters $p$. There, an ad hoc derivation was presented for obtaining \eqref{def:RNI} in the particular setting of the Lorenz 63 model.}
\add{In order to prove convergence of \eqref{def:RNI}}, the key steps that must be justified from the derivation is that the state and model errors become small, i.e. $|\tu-u| \ll 1$ and $|\tLam-\Lam| \ll 1$.
Rigorous justification for this requires careful control of the \add{state and model error and their subsequent interplay in order to validate} that the higher order terms that were dropped \add{in the course of the derivation} are indeed negligible.
This was carried out in the context of the Lorenz 63 equations to reconstruct the three relevant parameters appearing there in \cite{carlson2022}, and in the context of the 2D NSE in order to reconstruct the unknown viscosity \cite{martinez2022convergence}.

\subsubsection{Convergence analysis of the RNI algorithm}\label{sect:converge:RNI}
\add{We now move on to the convergence analysis of the RNI algorithm. In order to state and prove the theorem, it will be useful to set up a few preliminary steps. One important such step is a determining a suitable form for the model error.}

\add{To this end, we let $\tLam=\Lam^{(n)}$ and rewrite }\eqref{eq:Pw} as
    \begin{align}\label{eq:Pw:rewrite}
        E_k(\Lam^{(n)})&=-\frac{1}2\left\lb \frac{d}{dt}I_hw^{(n)},I_hw^{(n)}\right\rb+\frac{1}2\left\lb \de_{w^{(n)}}F_h(u^{(n)}),I_hw^{(n)}\right\rb\notag
        \\
        &\quad\hspace{2pt}+\tL_{kk}(\tu^{(n)})\De\Lam_k+\tL_{kk}(w^{(n)})\Lam_k,
    \end{align}
\add{for all $k=1,\dots, p$. Finally, upon combining \eqref{def:RNI} and \eqref{eq:Pw:rewrite}, we may deduce}
    \begin{align}
        \add{\Lam^{(n+1)}_k-\Lam_k}
        &\add{=\left(\Lam^{(n+1)}_k-\Lam^{(n)}_k\right)+\left(\Lam^{(n)}_k-\Lam_k\right)}\notag
        \\
        &\add{=(\tL(\tu^{(n)}))_{kk}^{-1}\frac{1}2\left\lb \frac{d}{dt}I_hw^{(n)},I_hw^{(n)}\right\rb-\frac{1}2(\tL(\tu^{(n)}))_{kk}^{-1}\left\lb \de_{w^{(n)}}F_h(u^{(n)}),I_hw^{(n)}\right\rb}\notag
        \\
        &\quad \add{-(\tL(\tu^{(n)}))^{-1}_{kk}(\tL(w^{(n)}))_{kk}\Lam_k\notag}.
    \end{align}
Thus
    \begin{align}\label{eq:model_error_RNI}
        \add{\tL^{(n)}\De\Lam^{(n+1)}=\tW^{(n)}\Lam+\tg^{(n)}},
    \end{align}
where
    \begin{align}\label{def:tW_tg_RNI}
        \add{\tW^{(n)}:= -\tL(w^{(n)})\Lam,\quad \tg^{(n)}_k:=\frac{1}2\left\lb \frac{d}{dt}I_hw^{(n)},I_hw^{(n)}\right\rb-\frac{1}2\left\lb \de_{w^{(n)}}F_h(u^{(n)}),I_hw^{(n)}\right\rb},
    \end{align}
for each $k=1,\dots, p$. Note that $\tg^{(n)}$ is a constant vector, that is, $\tg^{(n)}_k=\tg^{(n)}_\ell$, for all $k,\ell=1,\dots, p$. Finally, given $t_n>0$, we define the quantity
    \begin{align}\label{def:Un}
        U_n=\sup_{t\geq t_n}\|u(t;u_0)\|<\infty.
    \end{align}
Note that $U_n\leq U_{n+1}$, for all $n\geq0$.

\begin{theorem}\label{thm:RNI}
Suppose that there exists $t'$ such that
    \begin{align}
        \sup_{t\geq t'}\left(\mu^{p}\|\tu(t; \tu_0, \tLam) - u(t; u_0)\|+\mu^{q}\left\|\frac{d}{dt}\left(I_h\tu(t; \tu_0, \tLam) - I_hu(t; u_0)\right)\right\|\right)
        &\leq C \|\tLam-\Lam\|\label{eq:model_error_estimate_RNI}
    \end{align} 
where $\mu$ denotes the smallest eigenvalue of $M$, for some positive constant $C$ and exponents $p,q$, independent of $h$.
Let $\tu^{(-1)}\in\R^d$ denote an estimate of the state at the initial time $t_0=0$. Suppose the initial parameter estimate $\Lam^{(0)}\in\PP(\Lam)$ satisfies
    \begin{align}\label{est:RNI_prior}
        \|\De\Lam^{(0)}\|\leq \rho_0,
    \end{align}
for some $\rho_0>0$. There exists an increasing sequence of times $\{t_n\}_n$, along which \eqref{eq:model_error_estimate_RNI} holds, and positive constants $\{C_n\}_n$ such that 
    \begin{align}\label{eq:parameter_error_bound_RNI}   
         \|\Delta \Lam^{(n+1)}\| \leq D_*\frac{C_n\|I_hw^{(n)}\|\|(\tL^{(n)})^{-1}\|}{\mu_n^*}\|\Delta\Lam^{(n)}\|,
    \end{align}
for all $n\geq1$, for some positive constant $D_*$, where $\mu_n^*:=\min\{\mu_n^p,\mu_n^q\}$, provided that at each stage $\ell=0,\dots, n$, $\mu_\ell$ is chosen such that
    \begin{align}\label{cond:Lam_RNI}
        \Lam^{(\ell+1)}\in\PP(\Lam),\quad \mu_n^p\geq C_0U_{n+1}\rho_0,
    \end{align}
and that
   \begin{align}\label{cond:delta_RNI}
        0< \de_n:=1-D_*\frac{C_n\|I_hw^{(n)}\|\|(\tL^{(n)})^{-1}\|}{\mu_n^*}<1.
    \end{align}
Moreover, if $\sum_n\de_n=\infty$, then 
    \begin{align}\label{eq:converge_RNI}
        \lim_{n\rightarrow\infty}|\Delta\Lam^{(n)}|=0.
    \end{align}
\end{theorem}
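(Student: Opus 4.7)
The plan is to exploit the identity \eqref{eq:model_error_RNI}, rewriting the Newton update as $\De\Lam^{(n+1)} = (\tL^{(n)})^{-1}(\tW^{(n)}\Lam + \tg^{(n)})$, and to control the right-hand side by $\|I_hw^{(n)}\|\,\|\De\Lam^{(n)}\|/\mu_n^*$. Once this is done, condition \eqref{cond:delta_RNI} yields the single-step contraction \eqref{eq:parameter_error_bound_RNI}, whereupon the inequality $\prod_{k=0}^{n-1}(1-\de_k)\leq \exp(-\sum_{k=0}^{n-1}\de_k)$ together with the divergence of $\sum_n \de_n$ will deliver \eqref{eq:converge_RNI}. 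The entire argument is inductive in $n$: at each stage, I need to simultaneously verify (a) that $\Lam^{(n)}\in\PP(\Lam)$ so that the nudged dynamics is well-defined and dissipative, and (b) that the synchronization estimate \eqref{eq:model_error_estimate_RNI} is applicable at time $t_n$, after a sufficient transient for the chosen $\Lam^{(n)}$.

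First, I would bound $\tW^{(n)}\Lam$ and $\tg^{(n)}$ separately. Since the $k$-th entry of $\tW^{(n)}\Lam$ equals $-\tfrac{1}{2}\lb L_k w^{(n)}, I_hw^{(n)}\rb\Lam_k$, Cauchy--Schwarz with the operator bound $\|L_k\|_{\mathrm{op}}$ gives $\|\tW^{(n)}\Lam\|\lesssim \|w^{(n)}\|\,\|I_hw^{(n)}\|\,\|\Lam\|$. For $\tg^{(n)}$, the derivative term is bounded by $\|\tfrac{d}{dt}I_hw^{(n)}\|\,\|I_hw^{(n)}\|$, while local Lipschitz continuity of $F$ on the bounded absorbing set associated to $U_{n+1}$ (cf. \eqref{def:Un}) yields $\|\de_{w^{(n)}}F_h(u^{(n)})\|\lesssim \|w^{(n)}\|$, so the second term is controlled by $\|w^{(n)}\|\,\|I_hw^{(n)}\|$. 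Inserting the synchronization hypothesis \eqref{eq:model_error_estimate_RNI} at time $t_n$, which bounds $\|w^{(n)}\|$ and $\|\tfrac{d}{dt}I_hw^{(n)}\|$ by $\|\De\Lam^{(n)}\|/\mu_n^p$ and $\|\De\Lam^{(n)}\|/\mu_n^q$ respectively, I obtain
\begin{equation*}
\|\tW^{(n)}\Lam\| + \|\tg^{(n)}\| \leq D_* C_n \|I_hw^{(n)}\|\,\|\De\Lam^{(n)}\|/\mu_n^*,
\end{equation*}
for a combined constant absorbing $\|\Lam\|$, the Lipschitz constant, and the operator bounds on the $L_k$'s. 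Multiplying by $\|(\tL^{(n)})^{-1}\|$ yields \eqref{eq:parameter_error_bound_RNI}.

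Next, I carry out the induction. The base case follows from \eqref{est:RNI_prior}, which places $\Lam^{(0)}\in\PP(\Lam)$, together with the existence of $t'$ in the hypothesis to activate \eqref{eq:model_error_estimate_RNI} at some $t_0\geq t'$. Inductively, assuming $\Lam^{(n)}\in\PP(\Lam)$ and that \eqref{eq:model_error_estimate_RNI} is in force at $t_n$, the step-bound \eqref{eq:parameter_error_bound_RNI} together with \eqref{cond:delta_RNI} gives $\|\De\Lam^{(n+1)}\|\leq (1-\de_n)\|\De\Lam^{(n)}\|\leq \rho_0$; the constraint \eqref{cond:Lam_RNI} on $\mu_n$ then guarantees that $\Lam^{(n+1)}$ remains in the admissible open set $\PP(\Lam)$ and that the absorbing ball for \eqref{eq:nudge_system} under $\Lam^{(n+1)}$ is uniformly controlled (so that $U_{n+1}<\infty$). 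After waiting a transient time $t_{n+1}-t_n$ determined by the dissipativity of the system restarted at $\tu^{(n)}$ with parameters $\Lam^{(n+1)}$, the synchronization estimate \eqref{eq:model_error_estimate_RNI} activates again at time $t_{n+1}$, closing the induction.

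Finally, iterating the contraction produces $\|\De\Lam^{(n)}\|\leq \prod_{k=0}^{n-1}(1-\de_k)\,\|\De\Lam^{(0)}\|\leq e^{-\sum_{k=0}^{n-1}\de_k}\,\rho_0$, and divergence of $\sum_n \de_n$ yields \eqref{eq:converge_RNI}. I expect the main obstacle to be the induction bookkeeping of keeping $\Lam^{(n)}\in\PP(\Lam)$ and maintaining the uniform constants $C_n$ and absorbing-ball radii $U_n$ across all iterations, since each parameter update restarts the nudged system with a new forcing operator $\sum_k \Lam^{(n+1)}_k L_k$; this is precisely why the two-sided constraint \eqref{cond:Lam_RNI} on $\mu_n$ (large enough to preserve admissibility, yet permitting $\de_n$ in $(0,1)$) is essential, and why the theorem hypothesizes that the times $t_n$ at which \eqref{eq:model_error_estimate_RNI} holds form an increasing sequence rather than being, say, uniformly spaced.
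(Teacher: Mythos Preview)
Your proposal is correct and follows essentially the same approach as the paper: use the model-error identity \eqref{eq:model_error_RNI}, bound $\tW^{(n)}\Lam$ and $\tg^{(n)}$ via Cauchy--Schwarz, the operator norms of the $L_k$, the local Lipschitz constant of $F$, and the synchronization hypothesis \eqref{eq:model_error_estimate_RNI}, then close by induction and the product inequality $\prod(1-\de_k)\leq e^{-\sum\de_k}$. Two small points worth tightening: (i) you absorb $\|\Lam\|$ into $D_*$, but $\Lam$ is unknown---the paper handles this by writing $\Lam=\Lam^{(0)}-\De\Lam^{(0)}$ and bounding $\|\Lam\|\leq\rho_0+\|\Lam^{(0)}\|$, which is computable; (ii) the lower bound $\mu_n^p\geq C_0U_{n+1}\rho_0$ in \eqref{cond:Lam_RNI} is needed not only for admissibility but specifically to force $\|w^{(n)}\|\leq U_{n+1}$, so that $\tu^{(n)}=u^{(n)}+w^{(n)}$ stays in a fixed ball $B(0,2U_1)$ on which a single Lipschitz constant $C_F^1$ for $F$ is valid uniformly in $n$---this is what makes $D_*$ genuinely independent of the stage.
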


\begin{proof}
\add{Let $\tu^0(t)=\tu(t;\tu^{(-1)},\Lam^{(0)},M^{(0)})$ denote the solution of \eqref{eq:nudge_system} corresponding to $\tLam=\Lam^{(0)}$, initial value $\tu^{(-1)}$, and matrix $M^{(0)}$.}

\add{Now let $t_1$ be large enough that \eqref{eq:model_error_estimate_RNI} holds for $t'=t_1$, $\tu_0=u^{(-1)}$, $\tLam=\Lam^{(0)}$, and that $u(t_1;u_0)$, belongs to the absorbing ball of \eqref{eq:reference_system}, so that $U_1<\infty$. Let $\tu^{(0)}=\tu(t_1;\tu^{(-1)},\Lam^{(0)})$ and $u^{(0)}=u(t_1;u_0)$.} Then \eqref{eq:model_error_estimate_RNI} becomes
    \begin{align}\label{eq:state_error_RNI_0}
        \|w^{(0)}\| \leq \frac{C_0}{\mu_0^{p}} \|\Delta \Lam^{(0)}\|,
    \end{align}
\add{where $\mu_0$ denotes the smallest eigenvalue of $M^{(0)}$}. We choose 
    \begin{align}\label{eq:mu_choice_RNI_0}
        \add{\mu_0^p\geq C_0U_1\rho_0},
    \end{align}
\add{so that}
    \begin{align}\label{eq:w_RNI_0}
        \add{\|w^{(0)}\|\leq U_1}.    
    \end{align}

From \eqref{eq:model_error_RNI}, we have
    \[
        \add{\tL^{(0)} \Delta \Lam^{(1)} =\tW^{(0)}\Lam+\tg^{(0)}}
    \]
\add{First observe that}
    \begin{align}\label{eq:W0_RNI}
        \add{\tW^{(0)} \Lam=\tW^{(0)}\De\Lam^{(0)}+\tW^{(0)}\Lam^{(0)}}.
    \end{align}
\add{Since $I_h$ and $L_j$ are bounded operators, it follows from \eqref{def:tW_tg_RNI}, \eqref{def:RNI} that}
    \[
        \add{\|\tW^{(0)}\| \leq \frac{\|L\|}{2}\|w^{(0)}\|\|I_hw^{(0)}\|,}
    \]
\add{where $w^{(0)}=\tu^{(0)}-u^{(0)}$}. Thus, upon returning to \eqref{eq:W0_RNI}, we obtain
    \begin{align}\label{est:model_error_RNI_1a}
        \|\tW^{(0)}\Lam\|
        &\leq \frac{\|L\|}{2}\left(\rho_0+\|\Lam^{(0)}\|\right)\|w^{(0)}\|\|I_hw^{(0)}\|\notag
        \\
        &\leq \frac{C_0U_1\|L\|}{2\mu_0^p}\left(\rho_0+\|\Lam^{(0)}\|\right)\|I_hw^{(0)}\|\|\De\Lam^{(0)}\|.
    \end{align}

On the other hand, let $C_F^{1}$ denote the local Lipschitz constant of $F$ \add{corresponding to the ball $B(0,2U_1)$}. Then
    \begin{align}
        \|\tg^{(0)}\| &\leq \add{\frac{C_0}{4\mu_0^{q}}\|\De\Lam^{(0)}\|\|I_hw^{(0)}\|+\frac{\|I_h\|}2\|F(w^{(0)}+u^{(0)}) - F(u^{(0)})\|\|I_hw^{(0)}\|}\notag
        \\
        &\leq \add{\frac{C_0}{4\mu_0^{q}}\|\De\Lam^{(0)}\|\|I_hw^{(0)}\|+\frac{\|I_h\|}2C^1_F\|w^{(0)}\|\|I_hw^{(0)}\|}\notag
        \\
        &\leq \frac{C_0}2\left(\frac{1}{2\mu_0^{q}}+\frac{\|I_h\|C^1_FU_1}{\mu_0^p}\right)\|I_hw^{(0)}\|\|\De\Lam^{(0)}\|.\label{est:model_error_RNI_1b}
    \end{align}
Upon combining \eqref{est:model_error_RNI_1a}, \eqref{est:model_error_RNI_1b} in \eqref{eq:W0_RNI}, we obtain
    \begin{align}\notag
        \add{\|\Delta \Lam^{(1)}\|}
        &\add{\leq \|(\tL^{(0)})^{-1}\| \left( \|W^{(0)}\Lam\| + \|\tg^{(0)}\| \right)}\notag
        \\
        &\leq \frac{C_0\|I_hw^{(0)}\|\|(\tL^{(0)})^{-1}\|}{2\min\{\mu_0^p,\mu_0^q\}}\left\{U_1\left[\|L\|\left(\rho_0+\|\Lam^{(0)}\|\right)+\|I_h\|C^1_F\right]+\frac{1}{2}\right\}\|\De\Lam^{(0)}\|.\notag
    \end{align}
    
Let
    \[
        \add{D_* := \frac{1}2\left\{U_1\left[\|L\|\left(\rho_0+\|\Lam^{(0)}\|\right)+\|I_h\|C^1_F\right]+\frac{1}{2}\right\}},
    \]
where
    \[
        \|I\|:=\sup_{h>0}\|I_h\|.
    \]
Then
    \begin{align}\label{est:model_error_RNI_0}
        \add{\|\Delta \Lam^{(1)}\| \leq  D_*\frac{C_0\|I_hw^{(0)}\|\|(\tL^{(0)})^{-1}\|}{\mu_0^*} \|\Delta \Lam^{(0)}\|,}
    \end{align}
\add{where $\mu_0^*=\min\{\mu_0^p,\mu_0^{q}\}$, which we choose such that
    \begin{align}\label{eq:delta_RNI_0}
        1-\de_0=D_*\frac{C_0\|I_hw^{(0)}\|\|(\tL^{(0)})^{-1}\|}{\mu_0^*} <1.
    \end{align}
and, simultaneously, such that $\Lam^{(1)}\in\PP(\Lam)$.} We proceed inductively.

Suppose that \eqref{eq:parameter_error_bound_RNI}, \eqref{eq:delta_RNI_0} and that
    \begin{align}\label{eq:mu_choice_RNI_m}
        \mu_m^p\geq C_mU_{m+1}\rho_0,
    \end{align}
all hold for each $m = 0, \ldots, n-1$, for some $n\geq1$. Now consider the case $m = n$. \add{Let $\tu^{n}(t)=\tu(t;\tu^{(n-1)},\Lam^{(n)},M^{(n)})$ denote the solution of \eqref{eq:nudge_system} corresponding to $\tLam=\Lam^{(n)}$, initial value $\tu^{(n)}$, and matrix $M^{(n)}$.} Let $t_{n+1} > 0$ \add{be} large enough \add{so} that \eqref{eq:model_error_estimate} holds \add{for $t'=t_{n+1}$, $\tu_0=u^{(n-1)}$, $\tLam=\Lam^{(n)}$}. Then 
    \[
        \|w^{(n)}\| \leq \frac{C_n}{\mu_n^p} \|\Delta \Lam^{(n)}\|,
    \]
\add{where $\mu_n$ denotes the smallest eigenvalue of $M_n$.} Upon iterating, we see that
    \begin{align}\label{est:state_error_RNI_n}
            \|w^{(n)}\|\leq \frac{C_n}{\mu_n^p}\left(\prod_{j=1}^{n-1}(1-\de_j)\right)\|\De\Lam^{(0)}\|\leq \frac{C_n\rho_0}{\mu_n^p}
    \end{align}
We assume that $\mu_n$ is chosen large enough such that
    \begin{align}\label{eq:mu_choice_RNI_n}
           \mu_n\geq C_nU_{n+1}\rho_0,
    \end{align}
\add{where $U_j$ is defined as in \eqref{def:Un}.

Using the representation of the model error \eqref{eq:model_error_RNI}, recall that}
    \[
        \tL^{(n)}\Delta \Lam^{(n+1)} = \tW^{(n)} \Lam +\tg^{(n)}.
    \]
\add{With \eqref{eq:mu_choice_RNI_n}, we may now argue as in the base case to obtain}
    \begin{align}
        \add{\|\tW^{(n)}\Lam\|}&\add{\leq \frac{C_nU_1\|L\|}{2\mu_n^p}\left(\rho_0+\|\Lam^{(0)}\|\right)\|I_hw^{(n)}\|\|\De\Lam^{(n)}\|}\notag
        \\
        \add{\|\tg^{(n)}\|}&\add{\leq \frac{C_n\|I_hw^{(n)}\|\|(\tL^{(n)})^{-1}\|}{2\min\{\mu_n^p,\mu_n^q\}}\left\{U_1\left[\|L\|\left(\rho_0+\|\Lam^{(0)}\|\right)+\|I_h\|C^1_F\right]+\frac{1}{2}\right\}\|\De\Lam^{(n)}\|}.\notag
    \end{align}
\add{where we implicitly used the fact that $U_{j}\leq U_1$, for all $j\geq0$, which implies $C_F^{j}\leq C_F^1$}. 
Then 
    \begin{align}
        \add{\|\Delta \Lam^{(n+1)}\|}
        &\add{\leq D_*\frac{C_n\|I_hw^{(n)}\|\|(\tL^{(n)})^{-1}\|}{\mu_n^q}\|\De\Lam^{(n)}\|}.\notag
    \end{align}
\add{We conclude the proof by taking $\mu_n$ such that}
    \begin{align}\label{eq:delta_RNI_n}
        1-\de_n=D_*\frac{C_n\|I_hw^{(n)}\|\|(\tL^{(n)})^{-1}\|}{\mu_n^*} <1,
    \end{align}
and, simultaneously, such that $\Lam^{(n+1)}\in\PP(\Lam)$.
\end{proof}

\begin{remark}
We note that \cref{thm:RNI} addresses the issues of convergence for the algorithm defined by \eqref{def:RNI}. However, it does not justify the derivation presented above; this is a separate issue that is not treated here. Nevertheless, it is interesting that the derivation produces an operational algorithm. Justification for the derivation provided here is partially discussed in \cite{newey2024}, and rigorous justification for this derivation will be forthcoming in a future work.
\end{remark}

\subsection{Derivation and Convergence Analysis of the RLS algorithm}\label{sect:RLS}
The algorithm originally derived in \cite{pachev2022concurrent} and further expanded in \cite{FarhatLariosMartinezWhitehead2024} can be \add{obtained by taking} a \add{rather different perspective} 
to \add{that of the RNI algorithm. Indeed, from \eqref{eq:observed_state_error}, heuristically allowing $p=h^{-1}$, we alternatively see that}
\begin{equation}\notag
    \frac{d}{dt}I_h w = \left(\sum_{k=1}^p \tlam_k I_hL_k \tu + I_h F(\tu) - \frac{d}{dt}I_h u \right) - MI_h w.
\end{equation}
\add{The main idea of RLS is to enforce convergence of the observed state error to zero by selecting $\tlam_k$ such that}
    \begin{align}\label{eq:approx:RLS}
        \sum_{k=1}^p \tlam_k I_hL_k \tu + I_h F(\tu)=\frac{d}{dt}I_hu.
    \end{align}
\add{With this choice of parameters, one consequently enforces the following equation:}
    \begin{align}
        \frac{d}{dt}I_hw+MI_hw=0\notag.
    \end{align}
\add{Thus, if $M$ is positive semi-definite, it follows that $\|I_hw(t)\|\rightarrow0$ as $t\rightarrow\infty$. In other words, the approximating parameters are viewed as \textit{control parameters}, which are chosen in such way as to \textit{enforce asymptotic null controllability of the state error}.}

\add{Choosing $\tLam=(\tlam_1,\dots,\tlam_p)$ to satisfy \eqref{eq:approx:RLS} should, in principle, be possible provided that} $I_h$ \add{is time-independent, that is, the range of length scales that are observed and the form in which they are stored, i.e., as spectral data, local spatial averages, etc., do not change as we observe them}
and provided that our observations are \add{sufficiently smooth in time},
so that $\frac{d}{dt}I_hu$ is also observable or \add{at least sufficiently well-approximated by finite differences. However, since $\tu$ depends on $\tLam$, to solve for $\tLam$ directly in \eqref{eq:approx:RLS} is a delicate procedure.}

\add{Nevertheless, one can rigorously define an iterative algorithm based on this idea. Indeed, upon initializing the algorithm with a guess $\Lam^{(n)}=(\lam_1^{(n)},\dots, \lam_p^{(n)})$, the next approximate value, $\Lam^{(n+1)}$, of $\Lam$ can be generated by imposing that they satisfy the following system:} 
    \begin{equation}\label{eq:RLS_full}
        \sum_{k=1}^p \lambda_k^{(n+1)} I_h L_k \tu^{(n)} + I_h F(\tu^{(n)}) = \frac{d}{dt}I_h u^{(n)},
    \end{equation}
\add{where $\tu^{(n)} = \tu(t_{n+1};\tu^{(n-1)},\Lam^{(n)})$ represents the solution of \eqref{eq:nudge_system} corresponding to the choice of parameters $\Lam^{(n)}$, after time $t_{n+1}-t_n$ with initial condition $\tu^{(n-1)}$.}  
The issue with enforcing \eqref{eq:RLS_full} is
that we will typically have $h^{-1} > p$ (unlike the heuristic introduced above), i.e. there are usually far more observations than there are unknown parameters so that \eqref{eq:RLS_full} describes an over-determined system.

To identify an optimal solution of \eqref{eq:RLS_full}, we instead seek to minimize
\begin{equation}\label{eq:RLS_pose}
    \left\| \sum_{k=1}^p\lambda_k^{(n+1)}I_h L_k \tu^{(n)}+ I_h F(\tu^{(n)}) - \frac{d}{dt}I_h u^{(n)}\right\|^2_2,
\end{equation}
where the norm here is taken over \add{$\add{\ell^2}$}.  This \add{leads to finding a least squares solution to the following equation:}
\begin{equation}\label{def:RLS}
    \add{\tL^{(n)}}\Lam^{(n+1)} = f,\quad \add{\tL^{(n)}_{jk}:=(I_hL_k\tu^{(n)})_j,\quad f:=\frac{d}{dt}I_hu^{(n)} - I_h F(\tu^{(n)})}.
\end{equation}
\add{Recall that $\tL^{(n)}$ is a $d\times p$ matrix, and 
 thus is typically overdetermined.} The least-squares solution is given by
\begin{equation}\label{eq:RLS_sol}
    \Lam^{(n+1)} = \add{\left((\tL^{(n)})^T\tL^{(n)}\right)^{-1} (\tL^{(n)})^T} f,
\end{equation}
Note that this solution exists if and only if the columns of \add{$\tL^{(n)}$} are linearly independent at the update time $t_{n+1}$.  If this is not the case, then we can typically wait until a later time $t_{n+1}^*>t_{n+1}$ wherein the observable parts, \add{$I_hL_k$}, of the operators $L_k$ will be linearly independent of each other.  On the other hand, if there are two such linear operators that have redundant roles such as splitting the viscosity into two indistinguishable components $\nu = \nu_1+\nu_2$ then the columns of $\tL$ will be linearly dependent and the least squares solution \textit{will not} exist no matter what our choice of the evaluation time $t_{n+1}$ is. \add{We emphasize that the condition of} linear independence is \add{formulated only on the \textit{observable part}, $I_hL_k$, of these operators, rather than the operators, $L_k$, themselves.}

\begin{remark}
    We note that \add{although} both the RNI and RLS algorithms \add{share a common departure point} from 
    the evolution equation of the observable error, \add{$I_hw$}, \add{the two algorithms possess some distinct advantages and disadvantages in relation to one another}.  The primary advantages of RLS are that it makes no assumptions on the smallness of the nonlinear terms or the temporal derivative of the observable error and therefore provides a simpler route to proving convergence.  \add{However}, RLS does require differentiability-in-time of the observed state (or at least enough smoothness that a reasonable numerical approximation of that quantity is possible). In contrast, only continuous-time observations are required \add{for} 
    the RNI algorithm. 
    
    \add{The RNI algorithm, on the other hand, possesses a very simple update formula \eqref{def:RNI} that does not require one to solve a least-square minimization problem. Moreover, its solvability condition is easily checkable in practical implementation and does not require one to check the linear independence condition required by least-squares.}
\end{remark}

To prove \add{a convergence result analogous to \cref{thm:RNI} for the RLS algorithm}, we first introduce some notation.  Note that the least squares problem \eqref{eq:RLS_pose} is equivalent to solving the system: 
    \begin{equation}\label{eq:RLS_least_square_explicit}
        \sum_{k=1}^p \tlam_k^{(n+1)} \left\langle I_hL_k\tu^{(n)},I_hL_j\tu^{(n)}\right\rangle = \left\langle \frac{d}{dt}I_h u^{(n)} - I_h F(\tu^{(n)}),I_h L_j\tu^{(n)}\right\rangle,
    \end{equation}
\add{for each $j = 1,\ldots , p$.}
From the original evolution equation, we see that
    \begin{equation}\label{eq:least_squares}
        \left\langle \frac{d}{dt}I_h u^{(n)},I_h L_j\tu^{(n)}\right\rangle = \sum_{k=1}^p \lambda_k \left\langle I_h L_ku^{(n)},I_h L_j \tu^{(n)}\right\rangle + \left\langle I_h F(u^{(n)}),I_h L_j \tu^{(n)}\right\rangle.
    \end{equation}
Substituting this back into \eqref{eq:RLS_least_square_explicit} and using $w^{(n)} = \tu^{(n)} - u^{(n)}$ now leads to
    \begin{align}\label{eq:parameter_error}
        &\sum_{k=1}^p (\tlam_k^{(n+1)} - \lambda_k)\left\langle I_hL_k\tu^{(n)},I_h L_j\tu^{(n)}\right\rangle\notag
        \\
        &= \sum_{k=1}^p \lambda_k \left\langle I_hL_k w^{(n)},I_h L_j \tu^{(n)}\right\rangle-\left\langle \de_{w^{(n)}}F_h(u^{(n)}),I_hL_j\tu^{(n)}\right\rangle,
    \end{align}
which is a linear equation \add{representing} the error in the parameter estimate. \add{We refer to \eqref{eq:parameter_error} as the \textit{model error}}.

Now define the following $p\times p$ matrix $\tW^{(n)}$ and $p\times 1$ vector $\tg^{(n)}$ as:
\begin{equation}\label{def:tK_tW}
    \tK^{(n)}_{jk}=\left\langle I_hL_k\tu^{(n)},I_hL_j\tu^{(n)}\right\rangle,\quad\tW_{jk}^{(n)} = \left\langle I_hL_kw^{(n)},I_hL_j\tu^{(n)}\right\rangle,
\end{equation}
and
    \begin{align}\label{def:tf}
        \tg_j^{(n)} = -\left\langle \de_{w^{(n)}}F_h(u^{(n)}),I_hL_j\tu^{(n)}\right\rangle.
    \end{align}
This allows one to rewrite the \add{model error} as
\begin{align}\label{eq:model_error_RLS}
    \tilde{\mathcal{K}}^{(n)} \Delta \Lam^{(n+1)} = \tW^{(n)}\Lam + \tg^{(n)},
\end{align}
where $\Delta \Lam^{(n+1)} = \tLam^{(n+1)}-\Lam$. Note that $\tK=\tL^T\tL$. Before we state the main theorem, we will also once again make use of the notation \eqref{def:Un}.

\begin{theorem}\label{thm:RLS}
Suppose that there exists $t'$ such that
    \begin{align}\label{eq:model_error_estimate}
        \sup_{t\geq t'}\mu^p|\tu(t; \tu_0, \tLam) - u(t; u_0)|
        \leq C|\Delta\Lam|,
    \end{align} 
where $\mu$ denotes the smallest eigenvalue of $M$, for some positive constant $C$ and exponents $p,q$, independent of $h$. Let $\tu^{(-1)}\in\R^d$ denote an estimate of the state at the initial time $t_0=0$. Suppose the initial parameter estimate $\Lam^{(0)}\in\PP(\Lam)$ satisfies
    \begin{align}\label{est:RLS_prior}
        \|\De\Lam^{(0)}\|\leq \rho_0,
    \end{align}
for some $\rho_0>0$. Then there there exists an increasing sequence of times $\{t_n\}_n$, along which \eqref{eq:model_error_estimate} holds, and positive constants $\{C_n\}_n$ such that 
    \begin{align}\label{eq:parameter_error_bound}   
         \|\Delta \Lam^{(n+1)}\| \leq D_*\frac{C_n\|(\tK^{(n)})^{-1}\|}{\mu_n^p}\|\De\Lam^{(n)}\|,
    \end{align}
for all $n\geq1$, for some positive constant $D_*$, provided that at each stage $\ell=0,\dots, n$, the nudging parameters $\mu_\ell$ are chosen such that
    \begin{align}\label{cond:Lam_RLS}
        \Lam^{(\ell+1)}\in\PP(\Lam),\quad \mu_n^p\geq C_0U_{n+1}\rho_0,
    \end{align}
Moreover, if $\mu_n$ is chosen such that
    \begin{align}\label{cond:delta_RLS}
        0< \de_n:=1-D_*\frac{C_n\|(\tK^{(n)})^{-1}\|}{\mu_n^p}<1,
    \end{align}
where $\sum_n\de_n=\infty$, then 
    \begin{align}\label{eq:converge_RLS}
        \lim_{n\rightarrow\infty}|\Delta\Lam^{(n)}|=0.
    \end{align}
\end{theorem}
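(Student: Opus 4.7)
The plan is to mirror closely the inductive argument in the proof of \cref{thm:RNI}, taking advantage of the fact that the RLS bookkeeping is actually simpler because the model error equation \eqref{eq:model_error_RLS} does not involve the time derivative $\frac{d}{dt}I_hw^{(n)}$ (that derivative is assumed to be observable and is folded into the right-hand side of \eqref{eq:RLS_full} from the outset, so no corresponding $\mu^{-q}$ term appears in the final estimate). The entry point is the identity
\begin{equation}\notag
    \Delta\Lam^{(n+1)} = (\tK^{(n)})^{-1}\left(\tW^{(n)}\Lam + \tg^{(n)}\right),
\end{equation}
together with the definitions \eqref{def:tK_tW}--\eqref{def:tf} of $\tK^{(n)}$, $\tW^{(n)}$, $\tg^{(n)}$.

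First I would set up the base case. Choose $t_1$ large enough that \eqref{eq:model_error_estimate} holds with the solution $\tu^{(0)}$ of \eqref{eq:nudge_system} started at $\tu^{(-1)}$ with parameter $\Lam^{(0)}$, so that $\|w^{(0)}\|\leq C_0\mu_0^{-p}\|\Delta\Lam^{(0)}\|$. The restriction \eqref{cond:Lam_RLS} on $\mu_0$ guarantees $\|w^{(0)}\|\leq U_1$, which in turn confines $\tu^{(0)}$ to the ball $B(0,2U_1)$ where $F$ has a local Lipschitz constant $C_F^1$. Then I would estimate, entry by entry and using Cauchy--Schwarz together with the uniform operator bounds $\|I_h\|\leq \|I\|$ and $\|L_k\|\leq\|L\|$,
\begin{align*}
    \|\tW^{(n)}\Lam\| &\lesssim \|I\|^2\|L\|^2\|\Lam\|\,U_1\,\|w^{(n)}\|, \\
    \|\tg^{(n)}\| &\lesssim \|I\|^2\|L\|\,C_F^1\,U_1\,\|w^{(n)}\|,
\end{align*}
so that $\|\tW^{(n)}\Lam\|+\|\tg^{(n)}\| \leq D_*\|w^{(n)}\|$ for a constant $D_*$ absorbing $\|L\|$, $\|I\|$, $U_1$, $C_F^1$, $\rho_0$, and $\|\Lam^{(0)}\|$, exactly in the spirit of the constant defined after \eqref{est:model_error_RNI_0}. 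Multiplying by $\|(\tK^{(n)})^{-1}\|$ and substituting the state-error bound yields \eqref{eq:parameter_error_bound} in the $n=0$ case.

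Next, I would run the induction. Assuming \eqref{eq:parameter_error_bound} and \eqref{cond:delta_RLS} at stages $0,\dots,n-1$, the telescoping $\|\Delta\Lam^{(n)}\|\leq \prod_{j=0}^{n-1}(1-\delta_j)\,\rho_0 \leq \rho_0$ is immediate, so the induction hypothesis automatically propagates the $\|w^{(n)}\|\leq U_{n+1}$ bound under the choice \eqref{cond:Lam_RLS} of $\mu_n$. The rest of the step is verbatim: the same Cauchy--Schwarz estimates produce $\|\tW^{(n)}\Lam\|+\|\tg^{(n)}\| \leq D_*\|w^{(n)}\| \leq D_*C_n\mu_n^{-p}\|\Delta\Lam^{(n)}\|$, and the inversion of $\tK^{(n)}$ delivers \eqref{eq:parameter_error_bound}. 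The admissibility condition $\Lam^{(n+1)}\in\PP(\Lam)$ follows from $\|\Delta\Lam^{(n+1)}\|\leq (1-\delta_n)\rho_0<\rho_0$, placing the updated iterate in the same neighborhood.

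For the convergence conclusion, I would iterate the contraction to obtain $\|\Delta\Lam^{(n)}\| \leq \rho_0\prod_{j=0}^{n-1}(1-\delta_j)$. Using $\log(1-\delta_j)\leq -\delta_j$ for $\delta_j\in(0,1)$ gives $\prod_{j=0}^{n-1}(1-\delta_j)\leq \exp\bigl(-\sum_{j=0}^{n-1}\delta_j\bigr)\to 0$ under the divergence assumption $\sum_n\delta_n=\infty$, yielding \eqref{eq:converge_RLS}. The main obstacle I anticipate is ensuring a clean statement of the implicit requirement that $\tK^{(n)}$ is invertible with an a priori controllable $\|(\tK^{(n)})^{-1}\|$; this is not a purely algebraic issue, since it concerns the \emph{observable} linear independence of the operators $I_hL_k$ along the trajectory $\tu^{(n)}$. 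In the statement, this is effectively absorbed into the hypothesis \eqref{cond:delta_RLS} by letting $\mu_n$ be large enough to compensate; the delicate part, left to verification in the specific examples of \cref{sect:L96} and \cref{sect:RBC}, is that such $\mu_n$ can be found consistent with \eqref{cond:Lam_RLS} on each iteration.
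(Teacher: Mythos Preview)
Your proposal is correct and follows essentially the same approach as the paper's proof: both start from the model-error identity $\tK^{(n)}\Delta\Lam^{(n+1)}=\tW^{(n)}\Lam+\tg^{(n)}$, bound $\|\tW^{(n)}\Lam\|$ and $\|\tg^{(n)}\|$ via Cauchy--Schwarz, the operator bounds on $I_h,L_k$, and the local Lipschitz constant of $F$ on $B(0,2U_1)$, then absorb everything into a constant $D_*$ and close the induction exactly as you describe. Your write-up is in fact slightly more explicit than the paper's in two places---you spell out the final product-to-exponential step $\prod_j(1-\de_j)\leq\exp(-\sum_j\de_j)\to0$, and you flag the implicit invertibility hypothesis on $\tK^{(n)}$---both of which the paper leaves tacit.
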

    
\begin{proof} 
\add{
Let $\tu^0(t)=\tu(t;\tu^{(-1)},\Lam^{(0)},M^{(0)})$ denote the solution of \eqref{eq:nudge_system} corresponding to $\tLam=\Lam^{(0)}$, initial value $\tu^{(-1)}$, and matrix $M^{(0)}$.} 

\add{Let $t_1$ be large enough that \eqref{eq:model_error_estimate} holds for $t'=t_1$, $\tu_0=u^{(-1)}$, $\tLam=\Lam^{(0)}$, and such that $u(t_1;u_0)$ belongs to the absorbing ball of \eqref{eq:reference_system}, so that $U_1=\sup_{t\geq t_1}\|u(t;u_0)\|<\infty$. Let $\tu^{(0)}=\tu(t_1;\tu^{(-1)},\Lam^{(0)})$.} Then 
    \begin{align}\label{est:state_error_RLS_0}
        \|w^{(0)}\| \leq \frac{C_0}{\mu_0^{p}} \|\Delta \Lam^{(0)}\|,
    \end{align}
\add{where $\mu_0$ denotes the smallest eigenvalue of $M^{(0)}$}. We choose $\mu_0$ such that \eqref{eq:mu_choice_RNI_0} holds. Then \eqref{eq:w_RNI_0} also holds.

\add{From \eqref{eq:model_error_RLS}, we have}
    \begin{align}\notag
        \tilde{\mathcal{K}}^{(0)} \Delta \Lam^{(1)} = \tW^{(0)}\Lam + \tg^{(0)},
    \end{align}
First observe that
    \begin{align}\notag
        \tW_{jk}^{(0)} = \left\langle I_hL_kw^{(0)},I_hL_jw^{(0)}\right\rangle    +\left\langle I_hL_kw^{(0)},I_hL_ju^{(0)}\right\rangle    
    \end{align}
\add{Since $I_h$ and $L_j$ are bounded operators, it follows from \eqref{def:tK_tW} that}
    \begin{align}\label{eq:tW_RLS_0}
        \|\tW^{(0)}\| \leq \|I_h\|^2\|L\|^2\left(\|w^{(0)}\|+\|u^{(0)}\|\right)\|w^{(0)}\|\leq 2U_1\|I_h\|^2\|L\|^2\|w^{(0)}\|,
    \end{align}
\add{where $w^{(0)}=\tu^{(0)}-u^{(0)}$}. Now observe that
    \begin{align}\label{eq:decomposition_tW_RLS_0}
        \add{\tW^{(0)} \Lam=\tW^{(0)}\De\Lam^{(0)}+\tW^{(0)}\Lam^{(0)}}.
    \end{align}
Upon combining \eqref{eq:tW_RLS_0}, \eqref{eq:decomposition_tW_RLS_0} with \eqref{est:state_error_RLS_0}, \eqref{est:RLS_prior}, we obtain
    \begin{align}\label{est:tW_Lam_RLS_0}
        \|\tW^{(0)}\Lam\|&\leq\|\tW^{(0)}\|\left(\rho_0+\|\Lam^{(0)}\|\right)
        \notag
        \\
        &\leq 2U_1\|I_h\|^2\|L\|^2\left(\rho_0+\|\Lam^{(0)}\|\right)\|w^{(0)}\|\notag
        \\
        &\leq \frac{2C_0U_1}{\mu_0^p}\|I_h\|^2\|L\|^2\left(\rho_0+\|\Lam^{(0)}\|\right)\|\De\Lam^{(0)}\|.
    \end{align}
    
Next, let $C_F^{1}$ denote the local Lipschitz constant of $F$ {corresponding to the ball of radius $2U_1$}. Then from \eqref{eq:tW_RLS_0}, we have
    \begin{align}\label{est:tg_RLS_0}
        \|\tg^{(0)}\| 
        &\leq \|I_h\|^2\|L\|\|F(w^{(0)}+u^{(0)}) - F(u^{(0)})\|
        \notag
        \\
        &\leq C^1_F\|I_h\|^2\|L\|\|w^{(0)}\|\notag
        \\
        &\leq \frac{C_0C^1_F}{\mu_0^p}\|I_h\|^2\|L\|\|\De\Lam^{(0)}\|.
    \end{align}
Therefore, upon combining \eqref{est:tW_Lam_RLS_0}, \eqref{est:tg_RLS_0}, we arrive at
    \begin{align}\notag
        \|\Delta \Lam^{(1)}\|
        &\leq \frac{C_0\|(\tK^{(0)})^{-1}\|}{\mu_0^q}\|I_h\|\|L\|\left\{\left[2U_1\|L\|\left(\rho_0+\|\Lam^{(0)}\|\right)+C^1_F\right]\|I_h\|\right\}\|\De\Lam^{(0)}\|
    \end{align}
We then let
    \[
        D_* := \|I_h\|\|L\|\left\{\left[2U_1\|L\|\left(\rho_0+\|\Lam^{(0)}\|\right)+C^1_F\right]\|I\|\right\}, 
    \]
so that we may write
    \[
        \add{\|\Delta \Lam^{(1)}\| \leq  D_*\frac{C_0\|(\tK^{(0)})^{-1}\|}{\mu_0^{p}} \|\Delta \Lam^{(0)}\|.}
    \]  
\add{Hence, we choose $\mu_0$ } such that
    \begin{align}\label{cond:delta_RLS_0}
        1-\de_0=D_*\frac{C_0\|(\tK^{(0)})^{-1}\|}{\mu_0^{p}}<1,
    \end{align}
and, simultaneously, such that $\Lam^{(1)}\in\PP(\Lam)$. We proceed inductively.

Suppose that \eqref{eq:parameter_error_bound} and \eqref{cond:delta_RLS_0} hold for all $m = 0, \ldots, n-1$. Consider the case $m = n$. \add{Let $\tu^{n}(t)=\tu(t;\tu^{(n-1)},\Lam^{(n)},M^{(n)})$ denote the solution of \eqref{eq:nudge_system} corresponding to $\tLam=\Lam^{(n)}$, initial value $\tu^{(n-1)}$, and matrix $M^{(n)}$.} Let $t_{n+1} > 0$ \add{be} large enough \add{so} that \eqref{eq:model_error_estimate} holds \add{for $t'=t_{n+1}$, $\tu_0=u^{(n-1)}$, $\tLam=\Lam^{(n)}$}. Then 
    \[
        \|w^{(n)}\| \leq \frac{C_n}{\mu_n^p} \|\Delta \Lam^{(n)}\|,
    \]
\add{where $\mu_n$ denotes the smallest eigenvalue of $M_n$}. Under the induction hypotheses, we may argue as in \eqref{est:state_error_RNI_n} to deduce that
    \begin{align}\notag
        \|w^{(n)}\|\leq \frac{C_n\rho_0}{\mu_n^p}.    
    \end{align}
We then also assume that \eqref{eq:mu_choice_RNI_n} holds. Recall \eqref{eq:model_error_RLS}, which asserts that
    \[
        \tK^{(n)}\Delta \Lam^{(n+1)} = \tW^{(n)} \Lam +\tg^{(n)}.
    \]
\add{With this and \eqref{eq:mu_choice_RNI_n} in hand, we may now argue as in the base case to obtain}
    \begin{align}
        \|\tW^{(n+1)}\Lam\|&\leq\frac{2C_nU_1}{\mu_n^p}\|I_h\|^2\|L\|^2\left(\rho_0+\|\Lam^{(0)}\|\right)\|\De\Lam^{(n)}\|\notag
        \\
        \|\tg^{(n)}\|&\leq\frac{C_nC^1_F}{\mu_n^p}\|I_h\|^2\|L\|\|\De\Lam^{(n)}\|,\notag
    \end{align}
\add{where we implicitly used the fact that $U_{j}\leq U_1$, for all $j\geq0$, which implies $C_F^{j}\leq C_F^1$}. 
Thus 
    \begin{align}\notag
        \|\Delta \Lam^{(n+1)}\|
        &\leq D_*\frac{C_n\|(\tK^{(n)})^{-1}\|}{\mu_n^p}\|\De\Lam^{(n)}\|.
    \end{align}
We conclude the proof by taking $\mu_n$ such that 
    \begin{align}\notag
        1-\de_n=D_*\frac{C_n\|(\tK^{(n)})^{-1}\|}{\mu_n^p}<1,
    \end{align}
and, simultaneously, such that $\Lam^{(n+1)}\in\PP(\Lam)$. This completes the proof.
\end{proof}

\section{The two-layer Lorenz 96 (2L-L96) model}\label{sect:L96}

\subsection{L96 and its Dissipativity}
The original Lorenz-96 model is given by
\begin{equation}\label{eq:L96}
\frac{du_k}{dt} = u_{k-1}(u_{k+1}-u_{k-2}) - d_k u_k + F,
\end{equation}
where the $u_k$ are periodic with period $K$.  Each of the $u_k$ represents the discrete values of the zonal velocity of the atmosphere along a latitudinal circle where ${d}_j$ are the dissipation coefficients and $F$ is a constant large-scale forcing.  The model is more dominantly used as a canonical example of nonlinear dynamics, and has proven on many occasions to be a useful tool when investigating various forms of data assimilation and other data-driven algorithms (see \cite{grooms2015framework,Lguensat_Tandeo_Ailliot_Pulido_Fablet_2017,Brajard_Carrassi_Bocquet_Bertino_2020,Grooms_2021,stanley2021multivariate} for example).  This standard L96 model is not conducive to the AOT style of data assimilation however as it would require observation of nearly all the $u_j$ for full convergence of the other variables to occur.

\begin{figure}
    \centering
    \includegraphics[trim={0 75 20 80},clip, width=.45\textwidth]{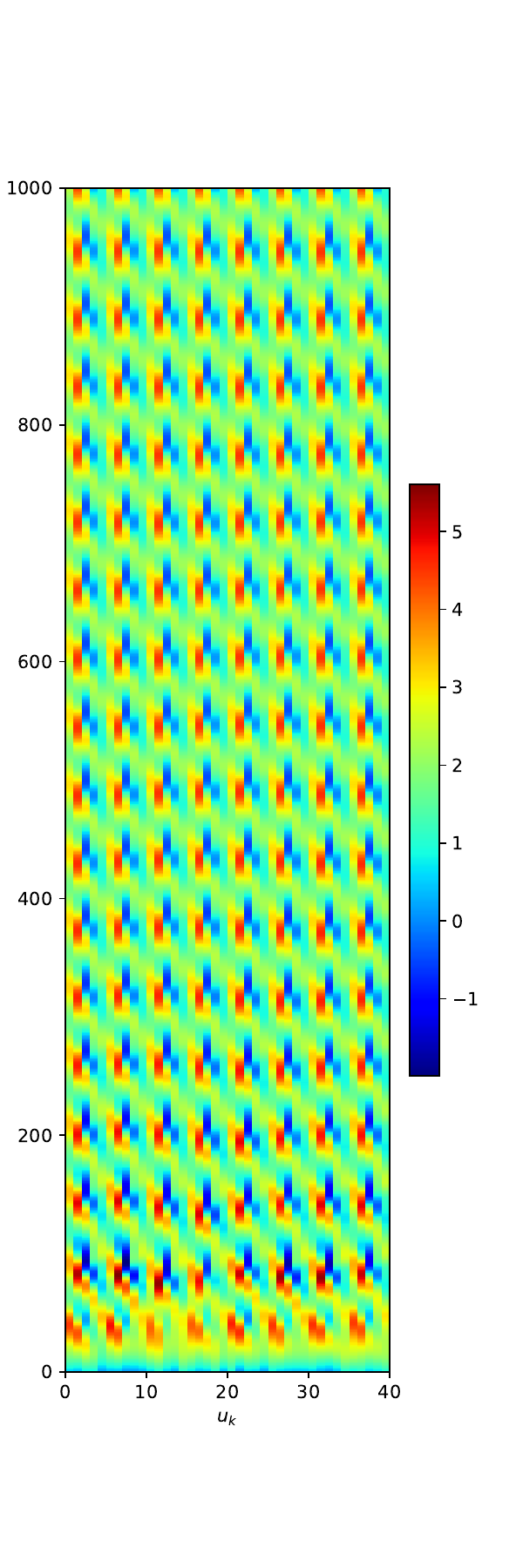}
    \caption{The dynamics of the two-layer L96 model visualized across the 40 slow scale $u_k$, plotted against time.  For this simulation, we used 5 $v_{k,j}$'s for each $k$.  All other parameters were selected as the default values from \cite{Chen_Majda_2018}.}
    \label{fig:L96_dynamics}
\end{figure}

We will instead focus on the two-layer Lorenz 96 model \cite{Wilks2005,Arnold_Moroz_Palmer_2013}. This model introduces a new set of small-scale variables $v_{k,j}$ that \add{characterize the evolution of small-scale waves, which, in turn, feed back nonlinearly into the evolution of the large-scale waves $u_k$}. We choose to study a particular variation of two-layer L96 considered in \cite{Chen_Majda_2018} for its conditional Gaussian structure, but whose analytical study appears to be underdeveloped. The particular system of \add{interest is given by:}
\begin{align}\label{eq:L96_slow}
\frac{du_k}{dt} &= u_{k-1}(u_{k+1}-u_{k-2}) + \sum_{j=1}^J \gam_{k,j}v_{k,j} u_k - d_ku_k + F,\\ \label{eq:L96_fast}
\frac{dv_{k,j}}{dt} &= -d_{k,j}v_{k,j} - \gamma_j u_k^2,
\end{align}
for $k=0,\dots K$ and $j=1,\dots J$, with the periodic boundary conditions $u_{-1}=u_K$ and $v_{-1,j} = v_{K,j}$ for all $j$, where the parameters satisfy:
    \begin{align}\label{eq:L96_assumptions}
        \gam_{k,j}=\gam_j,\quad\text{and}\quad \gam_j, d_k, d_{k,j}>0,\quad \text{for all}\ k=0,\dots, K, \quad j=1,\dots, J.
    \end{align}
Recall that the main goal is to use the RNI or RLS algorithm to reconstruct a given subset of unknown parameters $d_k$, $d_{k,j}$ \add{using} observations \add{on their} corresponding variables.

First let us show that \eqref{eq:L96_slow}--\eqref{eq:L96_fast} possesses an absorbing ball, which implies that the system is a dissipative dynamical system. For this, let 
    \begin{align}\label{def:d_star}
        d_*:=\min_k\left\{d_k,\ \min_jd_{k,j}\right\}.
    \end{align}
We will also make use of the notation
    \begin{align}\label{def:norm_u_v}
        \|u\|^2=\sum_{k=0}^Ku_k^2,\quad \|v\|^2=\sum_{k=0}^K\sum_{j=1}^Jv_{k,j}^2.
    \end{align}

\begin{lemma}\label{lem:L96_absorbing_ball}
For all $t\geq0$, it holds that
    \begin{align}\label{est:apriori:estimate}
        \|u(t)\|^2+\|v(t)\|^2\leq e^{-d_*t}\left(\|u(0)\|^2+\|v(0)\|^2\right)+K\frac{F^2}{d_*^2}(1-e^{-d_*t}).
    \end{align}
In particular, if $\|u(0)\|^2+\|v(0)\|^2\leq R^2$, then 
    \begin{align}\label{est:absorbing_ball}
        \sup_{t\geq T}\left(\|u(t)\|^2+\|v(t)\|^2\right)\leq 2K\frac{F^2}{d_*^2}=:\rho_*^2,
    \end{align}
for some $T(R)>0$, and if $R=\rho_*$, then \begin{align}\label{est:L96_bound}
        \sup_{t\geq0}\left(\|u(t)\|^2+\|v(t)\|^2\right)\leq \rho_*^2.
    \end{align}
\end{lemma}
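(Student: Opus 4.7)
The plan is to derive \eqref{est:apriori:estimate} by computing the time derivative of the Lyapunov-type functional $\Phi(t) := \|u(t)\|^2 + \|v(t)\|^2$, and then applying a standard Grönwall argument. The two consequences \eqref{est:absorbing_ball} and \eqref{est:L96_bound} then follow by direct algebraic manipulation of \eqref{est:apriori:estimate}.

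First, I would pair \eqref{eq:L96_slow} with $u_k$ and sum over $k$. There are three structural observations to make. The cubic advective term $\sum_{k} u_k u_{k-1}(u_{k+1}-u_{k-2})$ vanishes by periodicity: splitting it into two sums and making the index shift $k \mapsto k+1$ in the second one shows both sums equal $\sum_k u_{k-1}u_ku_{k+1}$, so they cancel. This gives
\[
\tfrac{1}{2}\tfrac{d}{dt}\|u\|^2 = \sum_{k=0}^{K}\sum_{j=1}^{J}\gamma_j v_{k,j}u_k^2 - \sum_{k=0}^{K} d_k u_k^2 + F\sum_{k=0}^{K} u_k.
\]
Next, pairing \eqref{eq:L96_fast} with $v_{k,j}$ and summing over $k,j$ yields
\[
\tfrac{1}{2}\tfrac{d}{dt}\|v\|^2 = -\sum_{k=0}^{K}\sum_{j=1}^{J} d_{k,j} v_{k,j}^2 - \sum_{k=0}^{K}\sum_{j=1}^{J}\gamma_j v_{k,j} u_k^2.
\]
The second key observation is that, because of the assumption $\gamma_{k,j}=\gamma_j$ in \eqref{eq:L96_assumptions}, the nonlinear coupling terms cancel exactly when the two equalities are added. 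This is the structural linchpin of the estimate; without symmetry of the coupling coefficients one would be forced to control a mixed cubic term, which is typically not absorbable.

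After cancellation, I would use the dissipation to bound $-\sum_k d_k u_k^2 - \sum_{k,j}d_{k,j}v_{k,j}^2 \leq -d_*\Phi$, where $d_*$ is defined in \eqref{def:d_star}. The forcing contribution is handled by Cauchy–Schwarz and Young's inequality:
\[
F\sum_{k=0}^{K}u_k \leq |F|\sqrt{K+1}\,\|u\| \leq \tfrac{d_*}{2}\|u\|^2 + \tfrac{(K+1)F^2}{2 d_*}.
\]
Combining these yields
\[
\tfrac{d}{dt}\Phi \leq -d_*\Phi + \tfrac{(K+1)F^2}{d_*},
\]
and Grönwall's inequality then produces \eqref{est:apriori:estimate} (up to writing $K+1$ versus $K$, which may be absorbed into the choice of indexing convention used in \eqref{def:norm_u_v}).

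For \eqref{est:absorbing_ball}, given $\Phi(0)\leq R^2$, I would simply choose $T=T(R)$ so that $e^{-d_* T}R^2 \leq K F^2/d_*^2$; then \eqref{est:apriori:estimate} gives $\Phi(t)\leq 2KF^2/d_*^2=\rho_*^2$ for all $t\geq T$. For \eqref{est:L96_bound}, with $R=\rho_*$, \eqref{est:apriori:estimate} produces
\[
\Phi(t) \leq \rho_*^2 e^{-d_* t} + \tfrac{1}{2}\rho_*^2(1-e^{-d_* t}) = \rho_*^2\left(\tfrac{1}{2}+\tfrac{1}{2}e^{-d_* t}\right)\leq \rho_*^2,
\]
giving the positively invariant ball. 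The genuine obstacle in the proof is the second observation above: verifying that the structural hypothesis $\gamma_{k,j}=\gamma_j$ is precisely what is needed for the coupling terms to cancel; everything else is a routine energy estimate.
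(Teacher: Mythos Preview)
Your proof is correct and follows essentially the same approach as the paper: derive the energy balance by pairing with $u_k$ and $v_{k,j}$, use periodicity and the symmetry $\gamma_{k,j}=\gamma_j$ to cancel the cubic terms, bound the forcing by Cauchy--Schwarz/Young, and conclude with Gr\"onwall. Your write-up is in fact more explicit about the two cancellations than the paper's, and your remark about $K$ versus $K+1$ correctly flags an indexing slip in the stated constant.
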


\begin{proof}

Upon multiplying \eqref{eq:L96_slow} by $u_k$ and \eqref{eq:L96_fast} by $v_{k,j}$, summing over $k$ and $k,j$, respectively, then invoking periodicity, we obtain the energy balance
    \begin{align}\label{eq:L96_energy}
        &\frac{1}2\frac{d}{dt}\left(\|u\|^2+\|v\|^2\right)=-\sum_{k=0}^Kd_ku_k^2-\sum_{k=0}^K\sum_{j=1}^Jd_{k,j}v_{k,j}^2+F\sum_{k=0}^Ku_k.
    \end{align}
By repeated application of the Cauchy-Schwarz inequality, it thus follows that
    \begin{align}\notag
         &\frac{d}{dt}\left(\|u\|^2+\|v\|^2\right)+d_*\left(\|u\|^2+\|v\|^2\right)\leq K\frac{F^2}{d_*}.
    \end{align}
An application of Gronwall's inequality then yields the estimate
    \begin{align}\notag
        \|u(t)\|^2+\|v(t)\|^2\leq e^{-d_*t}\left(\|u(0)\|^2+\|v(0)\|^2\right)+K\frac{F^2}{d_*^2}(1-e^{-d_*t}),
    \end{align}
which is \eqref{est:apriori:estimate}. Finally, choosing $t$ sufficiently large, implies \eqref{est:absorbing_ball}.
\end{proof}

From \cref{lem:L96_absorbing_ball} and the L96 system, we also obtain bounds on the time-derivative of the state variables. For this, let
    \begin{align}\label{def:d_max}
        \|d\|^2:=\sum_{k=0}d_k^2+\sum_{k=0}^K\sum_{j=1}^Jd_{k,j}^2.
    \end{align}
To simplify the presentation we will also make use of the notation
    \begin{align}\notag
        \frac{d}{dt} f = \dot{f}
    \end{align}

\begin{corollary}
Suppose that $(u(0),v(0))$ belongs to the absorbing ball of \eqref{eq:L96_slow}--\eqref{eq:L96_fast}, whose radius is determined by \cref{lem:L96_absorbing_ball}. Then
    \begin{align}\label{est:L96_bound_dot}
        \sup_{t\geq 0}\left(\|\du(t)\|^2+\|\dv(t)\|^2\right)\leq 4\left[\left(1+\|\gam\|^2\right)\rho_*^2+\|d\|^2+\|\gam\|^2\right]\rho_*^2:=\dot{\rho}_*^2
    \end{align}
\end{corollary}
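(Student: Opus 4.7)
The strategy is to estimate $\|\du(t)\|^2$ and $\|\dv(t)\|^2$ term-by-term by plugging the right-hand sides of \eqref{eq:L96_slow} and \eqref{eq:L96_fast} directly into the $\ell^2$ norm, and then invoking the absorbing-ball bound $\|u(t)\|^2+\|v(t)\|^2\leq\rho_*^2$ established in \cref{lem:L96_absorbing_ball}. For $\dot u_k$, I would split the right-hand side of \eqref{eq:L96_slow} into its four constituents (convective, coupling, linear dissipation, constant forcing), apply the elementary inequality $(a_1+\cdots+a_m)^2\leq m\sum_i a_i^2$, and then sum in $k$.

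The delicate piece is the convective nonlinearity $u_{k-1}(u_{k+1}-u_{k-2})$. Here I would use the finite-dimensional embedding $\|u\|_\infty\leq\|u\|$ to pull $u_{k-1}^2\leq\|u\|^2$ out of the sum, and then exploit periodicity together with $(a-b)^2\leq 2(a^2+b^2)$ to obtain $\sum_k(u_{k+1}-u_{k-2})^2\leq 4\|u\|^2$; this yields a contribution of order $\|u\|^4\leq\rho_*^4$. The coupling term is handled by Cauchy--Schwarz in $j$, giving $\sum_k(\sum_j\gamma_j v_{k,j}u_k)^2\leq \|\gam\|^2\|u\|^2\|v\|^2\leq\|\gam\|^2\rho_*^4$. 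The dissipative term contributes $\sum_k d_k^2 u_k^2\leq\|d\|^2\rho_*^2$, and the forcing contributes $KF^2$, which is not present in the target bound and must therefore be re-absorbed: using the very definition $\rho_*^2=2KF^2/d_*^2$ from \cref{lem:L96_absorbing_ball} together with $d_*^2\leq\|d\|^2$ gives $KF^2\leq\tfrac{1}{2}\|d\|^2\rho_*^2$, which folds cleanly into the $\|d\|^2\rho_*^2$ bucket.

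For $\dot v_{k,j}$, \eqref{eq:L96_fast} yields $\|\dv\|^2\leq 2\|d\|^2\|v\|^2+2\|\gam\|^2\sum_k u_k^4$, and one then bounds $\sum_k u_k^4\leq\|u\|_\infty^2\|u\|^2\leq\rho_*^2\|u\|^2$, producing $\|\gam\|^2\rho_*^2\|u\|^2$. This is the term responsible for the $\|\gam\|^2\rho_*^2$ factor inside the bracket of the statement (as opposed to $\|\gam\|^2\rho_*^4$). Adding the two estimates and regrouping by the power of $\rho_*$ then produces the announced bound $\dot\rho_*^2=4[(1+\|\gam\|^2)\rho_*^2+\|d\|^2+\|\gam\|^2]\rho_*^2$.

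The main obstacle is not analytic but combinatorial: tracking the multiplicative constants generated by the repeated applications of $(a_1+\cdots+a_m)^2\leq m\sum a_i^2$ and of Cauchy--Schwarz, and in particular making sure the forcing term $KF^2$ is re-expressed through the defining formula for $\rho_*$, since this is the only avenue by which $F$ can disappear from the right-hand side after the energy estimate of \cref{lem:L96_absorbing_ball} has been invoked. All other ingredients are immediate consequences of the absorbing-ball estimate and of the finite-dimensional nature of the state space, which makes the embeddings $\ell^2\hookrightarrow\ell^\infty$ and $\ell^2\hookrightarrow\ell^4$ automatic.
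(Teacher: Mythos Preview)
Your approach is essentially the same as the paper's: both plug the right-hand sides of \eqref{eq:L96_slow}--\eqref{eq:L96_fast} into the $\ell^2$ norms, apply Cauchy--Schwarz pointwise, sum using periodicity and the absorbing-ball bound \eqref{est:L96_bound}, and absorb the forcing contribution $KF^2$ via the defining relation $\rho_*^2=2KF^2/d_*^2$ together with $d_*^2\leq\|d\|^2$. The only cosmetic differences are in how the convective term is split (you factor out $\|u\|_\infty$, the paper expands the product pointwise) and in the precise bookkeeping of constants, neither of which affects the argument.
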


\begin{proof}
From \eqref{eq:L96_slow}--\eqref{eq:L96_fast}, the Cauchy-Schwarz inequality implies
    \begin{align}
        |\du_k|^2&\leq 2\left(|u_{k-1}|^2|u_{k+1}|^2+|u_{k-1}|^2|u_{k-2}|^2+\|\gam\|^2|u_k|^2\sum_{j=1}^Jv_{k,j}^2+d_k^2|u_k|^2+F^2\right)\notag
        \\
        |\dv_{k,j}|^2&\leq 2\left(d_{k,j}^2|v_{k,j}|^2+\gam_j^2|u_k|^2\right).\notag
    \end{align}
Upon summing over $k$ and $(k,j)$, then invoking periodicity and \eqref{est:L96_bound}, we obtain
    \begin{align}
        \|\du\|^2&\leq 2\left(2\|u\|^4+\|\gam\|^2\|u\|^2\|v\|^2+\|d\|^2\|u\|^2+KF^2\right)\notag
        \\
        &\leq 4\left(\left(1+\|\gam\|^2\right)\rho_*^2+\|d\|^2\right)\rho_*^2\notag
        \\
        \|\dv\|^2&\leq 2\left(\|d\|^2\|v\|^2+\|\gam\|^2\|u\|^2\right)\notag
        \\
        &\leq 2\left(\|d\|^2+\|\gam\|^2\right)\rho_*^2\notag.
    \end{align}
Combining these estimates yields \eqref{est:L96_bound_dot}
\end{proof}

\subsection{The Nudged 2L-L96 System}

Returning to the problem of parameter reconstruction in \eqref{eq:L96_slow}--\eqref{eq:L96_fast}, \add{we shall assume that} all large-scale variables are observed, but only a subset of small-scale variables are observed. The scope of the RNI and RLS algorithms is such that they will be able to reconstruct all large-scale damping parameters $d_k$, but only the small-scale damping parameters $d_{k,i}$ whose corresponding small-scale variable is directly observed.

Now the \add{nudging} system corresponding to \eqref{eq:L96_slow}--\eqref{eq:L96_fast} on which the two parameter reconstruction algorithms are based \add{is given by}
    \begin{align}\label{eq:L96_slow_nudge}
    \frac{d\tu_k}{dt} &= \tu_{k-1}(\tu_{k+1}-\tu_{k-2}) + \sum_{j=1}^J \gam_{k,j}\tilde{v}_{k,j} \tu_k - \td_k\tu_k + F - \add{\mu_k}(\tu_k - u_k),
    \\ 
    \frac{d\tilde{v}_{k,j}}{dt} &= -\td_{k,j}\tilde{v}_{k,j} - \gamma_j \tu_k^2-\add{\mu_{k,j}\chi_{\II}(k,j)(\tv_{k,j}-v_{k,j})},\label{eq:L96_fast_nudge}
    \end{align}
where \add{$\II\subset\{0,\dots, K\}\times\{1,\dots, J\}$ denotes the subset of index pairs representing which of the fast variables are directly observed, and $\chi_{\II}$ denotes the characteristic function over $\II$. Note that $\II$ may be empty, in which case only the large-scale damping parameters, $d_{k}$, will be reconstructed. In general,   
    \begin{align}\label{eq:II_structure}
        \II=\bigcup_{k=0}^K\JJ_k,\quad \text{where}\  \JJ_k\subset\{(k,1),\dots, (k,J)\}.
    \end{align}
Under these assumptions, observations constitute the following fraction of the total number of state variables:
    \begin{align}\label{eq:percentage}
        \frac{(K+1)+|\II|}{(K+1)(J+1)}.
    \end{align}

\begin{remark}
In the case $J=2$ and no fast variables are observed, then $|\II|=0$ and we observe only $1/3$ of the total state of the system; this particular case is analogous to the situation considered in \cite{carlson2020parameter}, where the Lorenz 63 model \cite{lorenz1963deterministic} was the model of interest and it was assumed that the $x$-variable was observed.
\end{remark}

We will furthermore assume that the proxy values $\td_k, \td_{k,j}$ satisfy
    \begin{align}\label{def:tdk_tdkj}
        \td_k\neq d_k,\qquad
        \td_{k,j}=\begin{cases}\td_{k,j}\neq d_{k,j},&(k,j)\in\II\\
        d_{k,j},&(k,j)\notin\II.
        \end{cases}
    \end{align}
Observe that under \eqref{eq:L96_assumptions}, the set of admissible parameters, $\PP(\Lam)$, for \eqref{eq:L96_slow}--\eqref{eq:L96_fast}, where $\Lam=(\{d_k\}_{k=0}^K,\{d_{k,j}\}_{(k,j)\in\II})$ represents the reference parameters, is an open set. Lastly, in the context of the general system \eqref{eq:reference_system}, the state variables of \eqref{eq:L96_slow}--\eqref{eq:L96_fast} are arranged as 
    \begin{align}\label{def:u_v}
        (u,v):=(u_0,\dots, u_K, v_{0,1},\dots v_{0,J},\dots, v_{K,1},\dots, v_{K,J}).
    \end{align}
Thus, the matrices $L_k$ are simply the elementary diagonal matrices of size $(K+1)(J+1)\times (K+1)(J+1)$ consisting of a single non-zero entry equal to $1$ in the position corresponding to the unknown parameter. Lastly, the observation operator $I_h$ is simply the projection operator defined by
    \begin{align}\label{def:Ih}
        I_h(u,v)=(u,(\chi_{\II}(k,j)v_{k,j})_{k,j}).
    \end{align}
}

\subsection{Convergence Analysis of RNI and RLS}

In light of \cref{thm:RNI} and \cref{thm:RLS}, in order to guarantee  convergence, it suffices to verify that the estimates \eqref{eq:model_error_estimate_RNI}, \eqref{eq:model_error_estimate} both hold. This is our current task. This will be established by \cref{lem:state_error_a}, \cref{lem:state_error_dot} below.

Denote the state error variables by 
    \begin{align}\label{def:state_error}
        w_k = \tu_k - u_k,\quad z_{k,j}=\tv_{k,j}-v_{k,j},
    \end{align}
and the model errors by
    \begin{align}\label{def:model_error}
        \De d_k=\td_k-d_k, \quad \De d_{k,j}=\td_{k,j}-d_{k,j}.
    \end{align}
We also make use of the notation
    \begin{align}\label{def:gam}
        \|\gam\|^2:=\sum_{j=0}^J\gam_j^2=\frac{1}{K+1}\sum_{k=0}^K\sum_{j=1}^J\gam_{k,j}^2,
    \end{align}
and
    \begin{align}\label{def:model_error_norm}
        \|\De d\|^2:=\sum_{k=0}^K(\De d_k)^2+\sum_{k=0}^K\sum_{j\in\JJ_k}(\De d_{k,j})^2.
    \end{align}
\add{Then the evolution of the state error is governed by}
\begin{align}
\frac{dw_k}{dt} &= w_{k-1}(w_{k+1}-w_{k-2})+w_{k-1}(u_{k+1}-u_{k-2}) +u_{k-1}(w_{k+1}-w_{k-2})\notag
    \\
    &\quad+\sum_{j=1}^J \gam_{k,j}(z_{k,j}w_k+z_{k,j}u_k +v_{k,j}w_k)- (\De d_k)\tu_k-d_kw_k  - \mu w_k \label{eq:L96_state_error_slow}
    \\ 
\frac{dz_{k,j}}{dt}&=-(\De d_{k,j})\tv_{k,j}-d_{k,j}z_{k,j}-\gam_jw_k^2-2\gam_ju_kw_k-\mu_{k,j}\chi_{\II}(k,j)z_{k,j}.\label{eq:L96_state_error_fast}
\end{align}

\begin{lemma}\label{lem:state_error_a}
Assume that $(u(0),v(0))$ belongs to the absorbing ball of \eqref{eq:L96_slow}--\eqref{eq:L96_fast}, whose radius is determined by \cref{lem:L96_absorbing_ball}. Suppose also that there exists $\de>0$ such that
    \begin{align}\label{cond:prior}
        \|\De d\|\leq \de.
    \end{align}
If 
\begin{align}\label{cond:mu_state_error_a}
        \mu_k\geq \max\left\{\frac{\|\gam\|^2\rho_*^2}{d_{k,j}}, 4M+2(2+(K+1)\|\gam\|)\rho_*\right\},\quad \min_{(k,j)\in\II}\mu_{k,j}\geq 2\de,
    \end{align}
for all $k=0,\dots, K$, then
    \begin{align}\label{est:L96_state_error_a}
        \|w(t)\|^2+\|z(t)\|^2\leq e^{-d_*t}\left(\|w(0)\|^2+\|z(0)\|^2\right)+\frac{\rho_*^2}{d_*\mu_*}\|\De d\|^2(1-e^{-d_*t}),
    \end{align}
where $\mu_*=\min_k\{\mu_k,\ \min_j\mu_{k,j}\}$. In particular, if $\|(w(0),z(0))\|\leq R$, then 
    \begin{align}\label{est:L96_state_error_b}
        \sup_{t\geq T}\left(\|w(t)\|^2+\|z(t)\|^2\right)\leq 2\frac{\rho_*^2}{d_*\mu_*}\|\De d\|^2=:\frac{\eta_*^2}{\mu_*}\|\De d\|^2
    \end{align}
for some $T(R)>0$.
\end{lemma}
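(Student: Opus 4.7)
The plan is to derive an $\ell^2$-energy balance for the joint state error $(w,z)$ defined in \eqref{def:state_error} and then conclude via Gronwall's inequality. First, I would multiply \eqref{eq:L96_state_error_slow} by $w_k$ and sum over $k\in\{0,\ldots,K\}$, multiply \eqref{eq:L96_state_error_fast} by $z_{k,j}$ and sum over $(k,j)$, then add. The purely cubic transport contribution $\sum_k w_k w_{k-1}(w_{k+1}-w_{k-2})$ vanishes by periodicity after a cyclic shift of indices (both terms $\sum_k w_k w_{k-1}w_{k+1}$ and $\sum_k w_k w_{k-1}w_{k-2}$ reduce to $\sum_k w_k w_{k+1}w_{k+2}$). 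This is the same symmetry responsible for energy conservation in the standard L96 transport.

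Next, I would estimate each remaining nonlinearity. The cross-transport terms $\sum_k w_{k-1}(u_{k+1}-u_{k-2})w_k$ and $\sum_k u_{k-1}(w_{k+1}-w_{k-2})w_k$ are bounded by $2\rho_*\|w\|^2$ via Cauchy--Schwarz together with $\|u\|_\infty\leq \|u\|\leq \rho_*$ from \cref{lem:L96_absorbing_ball}. The $\gam$-coupling terms $\sum_{k,j}\gam_{k,j}v_{k,j}w_k^2$ and $\sum_{k,j}\gam_{k,j}z_{k,j}u_k w_k$ (and its analogue $\sum_{k,j}\gam_j u_k w_k z_{k,j}$ coming from the fast equation) are controlled by Young's inequality, distributing the quadratic error contributions so that $\|w\|^2$ is absorbed by the slow nudging $\mu_k w_k^2$ while the $\|z\|^2$ piece is absorbed by the dissipation $d_{k,j}z_{k,j}^2$; this is precisely what forces the bound $\mu_k\geq \|\gam\|^2\rho_*^2/d_{k,j}$ in \eqref{cond:mu_state_error_a}. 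The cubic cross terms $\sum_{k,j}\gam_{k,j}z_{k,j}w_k^2+\gam_j w_k^2 z_{k,j}$ are the most delicate: I would bound them by $(K+1)\|\gam\|\rho_*(\|w\|^2+\|z\|^2)$ after invoking a bootstrap bound $\|w\|,\|z\|\lesssim \rho_*$ that follows from applying a parallel absorbing-ball estimate to \eqref{eq:L96_slow_nudge}--\eqref{eq:L96_fast_nudge} (whose effective dissipation is only enhanced by the nudging) combined with \eqref{est:L96_bound} for $(u,v)$.

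Finally, the driving contributions from the model error are handled by Young's inequality,
\[
|(\De d_k)\tu_k w_k|\leq \frac{(\De d_k)^2\tu_k^2}{2\mu_k}+\frac{\mu_k}{2}w_k^2,\qquad |(\De d_{k,j})\tv_{k,j}z_{k,j}|\leq \frac{(\De d_{k,j})^2\tv_{k,j}^2}{2\mu_{k,j}}+\frac{\mu_{k,j}}{2}z_{k,j}^2,
\]
where the $w_k^2,z_{k,j}^2$ pieces are absorbed by half of the nudging, and $\|\tu\|,\|\tv\|\leq \rho_*$ is provided by the same bootstrap (the condition \eqref{cond:prior} on $\|\De d\|$ keeps $\tLam$ in $\PP(\Lam)$, whose absorbing-ball radius is uniform on compact sets by the standing assumption). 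Summing, what remains is a differential inequality
\[
\frac{d}{dt}\bigl(\|w\|^2+\|z\|^2\bigr)+d_*\bigl(\|w\|^2+\|z\|^2\bigr)\leq \frac{\rho_*^2}{\mu_*}\|\De d\|^2,
\]
and Gronwall's inequality then produces \eqref{est:L96_state_error_a}; choosing $t\geq T(R)$ large enough that the initial term is dominated yields \eqref{est:L96_state_error_b}.

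The main obstacle is bookkeeping the constants in \eqref{cond:mu_state_error_a} so that \emph{every} superlinear-in-error term — particularly the trilinear $\gam$-type terms that couple $w$ to $z$ — is absorbed by the combination of linear dissipation $d_k,d_{k,j}$ and nudging $\mu_k,\mu_{k,j}$ without any smallness hypothesis on the initial state error beyond what follows from the absorbing-ball bootstrap. This bootstrap, in turn, depends crucially on \eqref{cond:prior} keeping the proxy damping parameters within the dissipative admissible set.
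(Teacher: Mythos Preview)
Your overall plan (energy balance for $(w,z)$, then Gronwall) is correct and matches the paper, but there is a genuine gap in the treatment of the $\gamma$-coupling terms. You write the cubic contributions as $\sum_{k,j}\gamma_{k,j}z_{k,j}w_k^2+\gamma_j w_k^2 z_{k,j}$ and then invoke a bootstrap bound $\|w\|,\|z\|\lesssim\rho_*$ to control them. The sign is wrong: multiplying \eqref{eq:L96_state_error_slow} by $w_k$ contributes $+\gamma_{k,j}z_{k,j}w_k^2$, while multiplying \eqref{eq:L96_state_error_fast} by $z_{k,j}$ contributes $-\gamma_j w_k^2 z_{k,j}$. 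Because $\gamma_{k,j}=\gamma_j$ by \eqref{eq:L96_assumptions}, these cancel exactly. This is the same energy-conserving structure of the slow--fast coupling that makes \cref{lem:L96_absorbing_ball} work, and the paper's proof uses it. The only $\gamma$-terms that survive in the energy balance are $\sum_{k,j}\gamma_{k,j}v_{k,j}w_k^2$ and $-\sum_{k,j}\gamma_j u_k w_k z_{k,j}$, both of which are only quadratic in the error and linear in the reference state.

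This matters because your proposed workaround does not close. Even granting the bootstrap $\|w\|,\|z\|\lesssim\rho_*$, a bound of the form $(K+1)\|\gamma\|\rho_*\|z\|^2$ must be absorbed by the dissipation on \emph{all} fast modes, including the unobserved ones $(k,j)\notin\II$, which carry only $d_{k,j}z_{k,j}^2$ and no nudging. Nothing in the hypotheses forces $d_{k,j}\geq (K+1)\|\gamma\|\rho_*$, so the inequality cannot be closed this way. A related issue affects your treatment of the model-error terms: rather than bounding $\|\tu\|,\|\tv\|\leq\rho_*$ via a bootstrap on the nudged system (whose absorbing radius would in general depend on $\mu$ through the observational forcing), the paper simply splits $\tu_k=w_k+u_k$ and $\tv_{k,j}=z_{k,j}+v_{k,j}$; the resulting $(\De d_k)w_k^2$ and $(\De d_{k,j})z_{k,j}^2$ terms are then absorbed by the nudging via the conditions $\mu_k\gtrsim\de$ and $\mu_{k,j}\geq 2\de$ in \eqref{cond:mu_state_error_a}, and the cross terms $(\De d_k)u_kw_k$, $(\De d_{k,j})v_{k,j}z_{k,j}$ are handled by Young with $\|u\|,\|v\|\leq\rho_*$ from \eqref{est:L96_bound}. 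Once you use the cancellation and this splitting, the bootstrap is unnecessary and the constants in \eqref{cond:mu_state_error_a} fall out directly.
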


\begin{proof}

Recall \eqref{eq:II_structure}, the periodic boundary conditions, and \eqref{eq:L96_assumptions}. Then \add{the state error satisfies the following energy balance:
    \begin{align}
        \frac{1}2&\frac{d}{dt}\left(\|w\|^2+\|z\|^2\right)+\sum_{k=0}^Kd_kw_k^2+\sum_{k=0}^K\sum_{j=1}^Jd_{k,j}z_{k,j}^2+\sum_{k=0}^K\mu_kw_k^2+\sum_{k=0}^K\sum_{j\in\JJ_k}\mu_{k,j}z_{k,j}^2\notag
        \\
        &=\sum_{k=0}^K\left(w_{k-1}w_ku_{k+1}-w_{k-2}u_{k-1}w_k\right)+\sum_{k=0}^K\sum_{j=1}^J\gam_{k,j}v_{k,j}w_k^2-\sum_{k=0}^K\sum_{j=1}^J\gam_ju_kw_kz_{k,j}\notag
        \\
        &\quad -\sum_{k=0}^K(\De d_k)\tu_kw_k-\sum_{k=0}^K\sum_{j\in\JJ_k}(\De\td_{k,j})\tv_{k,j}z_{k,j}\notag
    \end{align}
}
By repeated application of the Cauchy-Schwarz inequality and \eqref{est:L96_bound}, we estimate
    \begin{align}
        \sum_{k=0}^K\left(w_{k-1}w_ku_{k+1}-w_{k-2}u_{k-1}w_k\right)&\leq 2\rho_*\|w\|^2\notag
        \\
        \sum_{k=0}^K\sum_{j=1}^J\gam_{k,j}v_{k,j}w_k^2&\leq (K+1)\|\gam\|\rho_*\|w\|^2\notag
        \\
        -\sum_{k=0}^K\sum_{j=1}^J\gam_ju_kw_kz_{k,j}&\leq \frac{1}4\sum_{k=0}^K\mu_kw_k^2+\sum_{k=0}^K\frac{u_k^2}{\mu_k}\left(\sum_{j=1}^J\gam_jz_{k,j}\right)^2\notag\\
        &\leq \frac{1}4\sum_{k=0}^K\mu_kw_k^2+\|\gam\|^2\rho_*^2\sum_{k=0}^K\frac{1}{\mu_k}\|z_k\|^2\notag.
    \end{align} 
For the remaining terms involving the model error, first observe that
    \begin{align}
            -\sum_{k=0}^K(\De d_k)\tu_kw_k&=-\sum_{k=0}^K(\De d_k)w_k^2-\sum_{k=0}^K(\De d_k)u_kw_k\notag
            \\
            -\sum_{k=0}^K\sum_{j\in\JJ_k}(\De d_{k,j})\tv_{k,j}z_{k,j}&= -\sum_{k=0}^K\sum_{j\in\JJ_k}(\De d_{k,j})z_{k,j}^2 -\sum_{k=0}^K\sum_{j\in\JJ_k}(\De d_{k,j})v_{k,j}z_{k,j}.\notag
    \end{align}
We may then estimate
    \begin{align}
        -\sum_{k=0}^K(\De d_k)u_kw_k&\leq \frac{\rho_*^2}2\sum_{k=0}^K\frac{(\De d_k)^2}{\mu_k}+\frac{1}2\sum_{k=0}^K\mu_kw_k^2\notag
        \\
         -\sum_{k=0}^K\sum_{j\in\JJ_k}(\De\td_{k,j})v_{k,j}z_{k,j}&\leq \frac{\rho_*^2}2\sum_{k=0}^K\sum_{j\in\JJ_k}\frac{(\De d_{k,j})^2}{\mu_{k,j}}+\frac{1}2\sum_{k=0}^K\sum_{j\in\JJ_k}\mu_{k,j}z_{k,j}^2.\notag
    \end{align}
Upon combining the above estimates, we arrive at
    \begin{align}
        &\frac{1}2\frac{d}{dt}\left(\|w\|^2+\|z\|^2\right)+\sum_{k=0}^K\sum_{j=1}^J\left(d_{k,j}-\frac{\|\gam\|^2\rho_*^2}{\mu_k}\right)z_{k,j}^2+\sum_{k=0}^K\sum_{j\in\JJ_k}\left(\frac{\mu_{k,j}}{2}+(\De d_{k,j})\right)z_{k,j}^2\notag
        \\
        &\leq \frac{\rho_*^2}2\left[\sum_{k=0}^K\frac{(\De d_k)^2}{\mu_k}+\sum_{k=0}^K\sum_{j\in\JJ_k}\frac{(\De d_{k,j})^2}{\mu_{k,j}}\right]-\sum_{k=0}^K\left(d_k+\frac{\mu_k}4+(\De d_k)-\left(2+K\|\gam\|\right)\rho_*\right)w_k^2.\notag
    \end{align}
One now sees that \eqref{cond:mu_state_error_a} and an application of Gronwall's inequality  implies \eqref{est:L96_state_error_a}.
\end{proof}

Next, we study the time-derivative of the state error. For this, it will be useful to recall the following notation:
    \begin{align}\label{def:dot_w_v}
        \dw=\frac{d}{dt}w,\quad \dz=\frac{d}{dt}z.
    \end{align}
It will also be useful to separate the evolution of fast variables into its observed component and unobserved component
Then the time-derivative of the state errors is governed by the following system:
    \begin{align}
        \frac{d\dw_k}{dt} &= \dw_{k-1}(w_{k+1}-w_{k-2})+w_{k-1}(\dw_{k+1}-\dw_{k-2})\notag
        \\
        &\quad+\dw_{k-1}(u_{k+1}-u_{k-2})+w_{k-1}(\du_{k+1}-\du_{k-2})\notag
        \\
        &\quad+\du_{k-1}(w_{k+1}-w_{k-2})+u_{k-1}(\dw_{k+1}-\dw_{k-2})\notag
        \\
        &\quad+\sum_{j=1}^J \gam_{k,j}(\dz_{k,j}w_k+\dz_{k,j}u_k +\dv_{k,j}w_k+z_{k,j}\dw_k+z_{k,j}\du_k +v_{k,j}\dw_k)\notag
        \\
        &\quad- (\De d_k)\dtu_k-d_k\dw_k  - \mu \dw_k \label{eq:L96_state_error_dot_slow}
        \\
        \frac{d\dz_{k,j}}{dt}&=-(\De d_{k,j})\dtv_{k,j}-d_{k,j}\dz_{k,j}-2\gam_jw_k\dw_k-2\gam_j\du_kw_k-2\gam_ju_k\dw_k-\mu_{k,j}\dz_{k,j},\quad (k,j)\in\II\label{eq:L96_state_error_dot_fast_obs}
        \\
        \frac{d\dz_{k,j}}{dt}&=-(\De d_{k,j})\dtv_{k,j}-d_{k,j}\dz_{k,j}-2\gam_jw_k\dw_k-2\gam_j\du_kw_k-2\gam_ju_k\dw_k,\quad(k,j)\notin\II.\label{eq:L96_state_error_dot_fast_unobs}
    \end{align}
We observe that from \cref{lem:L96_absorbing_ball} and directly from the systems \eqref{eq:L96_slow}--\eqref{eq:L96_fast} and \eqref{eq:L96_state_error_slow}--\eqref{eq:L96_state_error_fast} that the time derivatives $(\dw,\dz)$ are well-defined and uniformly bounded over any finite-time interval. Our present goal is to obtain bounds on $(\dw,\dz)$ which depend in a favorable way on the nudging parameters, after possibly passing a transient time. 

\begin{lemma}\label{lem:state_error_dot}
Suppose that each of the hypotheses in \cref{lem:L96_absorbing_ball} are satisfied, so that both \eqref{est:L96_bound} and \eqref{est:L96_state_error_b} hold. Further suppose that
    \begin{align}\label{cond:mu_dot_a}
        \mu_*\geq 2\de
    \end{align}
and that
    \begin{align}\label{cond:mu_dot_b}
        \mu_*^3\geq\frac{16(4+\|\gam\|)\rho_*}3\mu_*^2+\frac{48\eta_*\de}{3}\mu_*^{3/2}+64\|\gam\|^2\left(\frac{\eta_*^2\de^2}{3d_*}+\rho_*^2\right)\mu_*+8\|\gam\|^2\eta_*^2\left(\frac{32\de^2}{3}+1\right).
    \end{align}
Then
    \begin{align}\label{est:L96_state_error_dot_a}
        \|\dw(t)\|^2+\|\dz(t)\|^2\leq e^{-d_*(t-T)}\left(\|\dw(T)\|^2+\|\dz(T)\|^2\right)+\frac{\deta_*^2}{2\mu_*}\|\De d\|^2,
    \end{align}
for all $t\geq T$, where $T$ is the transient time obtained from \cref{lem:state_error_a}, and
    \begin{align}\label{def:eta_dot}
        \deta_*^2:=\frac{4\drho_*^2}{d_*}\left(\frac{2\eta_*^2\left(16+5\|\gam\|^2\right)}{\mu_*}+\frac{4\|\gam\|^2\eta_*^2}{d_*}+4\drho_*^2+1\right).
    \end{align}
In particular, there exists $T'\geq T$ such that
    \begin{align}\label{est:L96_state_error_dot_b}
        \sup_{t\geq T'}\left(\|\dw(t)\|^2+\|\dz(t)\|^2\right)\leq \frac{\deta_*^2}{\mu_*}\|\De d\|^2.
    \end{align}
\end{lemma}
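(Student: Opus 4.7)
The plan is to mimic the strategy of \cref{lem:state_error_a}, now at the level of time derivatives. I pair \eqref{eq:L96_state_error_dot_slow} with $\dw_k$ and sum over $k$, and pair \eqref{eq:L96_state_error_dot_fast_obs}--\eqref{eq:L96_state_error_dot_fast_unobs} with $\dz_{k,j}$ and sum over $(k,j)$, to arrive at an energy identity of the form
    \begin{align*}
        \frac12\frac{d}{dt}\left(\|\dw\|^2+\|\dz\|^2\right)&+\sum_{k=0}^K\left(d_k+\mu_k\right)\dw_k^2+\sum_{k=0}^K\sum_{j=1}^Jd_{k,j}\dz_{k,j}^2+\sum_{k=0}^K\sum_{j\in\JJ_k}\mu_{k,j}\dz_{k,j}^2\\
        &= R_{\text{adv}}+R_{\text{mix}}+R_{\text{mod}},
    \end{align*}
where $R_{\text{adv}}$ collects the self-interactions of $\dw$ with $u$ and $w$ coming from the transport nonlinearity, $R_{\text{mix}}$ collects the interactions involving $\du,\dv$, and $R_{\text{mod}}$ collects the terms involving the model errors $\De d_k,\De d_{k,j}$. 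Note that $R_{\text{mod}}$ is handled by the algebraic identity $\dtu_k=\du_k+\dw_k$ (and similarly for $\dtv_{k,j}$), which yields for instance $-\sum_k(\De d_k)\dtu_k\dw_k=-\sum_k(\De d_k)\du_k\dw_k-\sum_k(\De d_k)\dw_k^2$; the second piece is absorbed by the nudging dissipation using \eqref{cond:mu_dot_a}.

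Each term on the right is then controlled by repeated Cauchy--Schwarz and Young's inequality, using three layers of bounds: the bound $\|u\|,\|v\|\leq\rho_*$ from \cref{lem:L96_absorbing_ball}, the bound $\|\du\|,\|\dv\|\leq\drho_*$ from the Corollary on \eqref{est:L96_bound_dot}, and the bound $\|w\|^2+\|z\|^2\leq \eta_*^2\de^2/\mu_*$ obtained from \eqref{est:L96_state_error_b} combined with the prior \eqref{cond:prior}. In particular, the latter yields the pointwise estimate $|w_k|,|z_{k,j}|\leq \eta_*\de/\mu_*^{1/2}$, which is crucial to handle the advective self-interactions in $R_{\text{adv}}$ and the triple products of the form $\sum_{k,j}\gam_j u_k\dw_k\dz_{k,j}$ that arise from the coupling. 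After splitting each product appropriately and absorbing one factor into the nudging dissipation $\sum_k\mu_k\dw_k^2+\sum_{k,j\in\JJ_k}\mu_{k,j}\dz_{k,j}^2$, the resulting inequality takes the shape
    \begin{align*}
        \frac{d}{dt}\left(\|\dw\|^2+\|\dz\|^2\right)+2d_*\left(\|\dw\|^2+\|\dz\|^2\right)\leq \frac{d_*\deta_*^2}{\mu_*}\|\De d\|^2,
    \end{align*}
provided the net coefficient of $\|\dw\|^2+\|\dz\|^2$ on the left remains positive. The definition of $\deta_*^2$ in \eqref{def:eta_dot} is chosen precisely so that the aggregation of all surviving forcing pieces, which are each of order $\drho_*^2$ times a dimensionless combination of $\eta_*,\rho_*,\|\gam\|,d_*^{-1},\mu_*^{-1}$, produces this right-hand side. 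Gronwall's inequality then delivers \eqref{est:L96_state_error_dot_a}, and waiting for the transient $e^{-d_*(t-T)}$ to drop below the asymptotic term yields \eqref{est:L96_state_error_dot_b} for some $T'\geq T$.

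The main obstacle is bookkeeping: the cubic condition \eqref{cond:mu_dot_b} encodes the fact that different cross terms require absorption into the dissipation at different orders of $\mu_*$. Schematically, the pure advective interactions with coefficient $\rho_*$ contribute to the $\mu_*^2$ coefficient (these are the $\tfrac{16(4+\|\gam\|)\rho_*}3$ term); products involving one factor of $\|w\|_\infty\sim\eta_*\de/\mu_*^{1/2}$ with one of $\rho_*$ contribute to the $\mu_*^{3/2}$ coefficient (the $48\eta_*\de/3$ term); the doubly-coupled $\gam$-terms such as $\sum_{k,j}\gam_j\dv_{k,j}w_k\dw_k$ and $\sum_{k,j}\gam_ju_k\dw_k\dz_{k,j}$ contribute to the $\mu_*$ coefficient (the $64\|\gam\|^2$ term); and the residual products with $\|\gam\|^2\eta_*^2$ give the constant-order piece. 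Verifying that every term produced by the energy identity fits into one of these four buckets with the stated constants is the longest part of the argument; aside from this careful accounting, no new analytical idea beyond those used in \cref{lem:state_error_a} is required.
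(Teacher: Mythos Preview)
Your approach is essentially the same as the paper's: derive the energy balance for $(\dw,\dz)$ by pairing \eqref{eq:L96_state_error_dot_slow}--\eqref{eq:L96_state_error_dot_fast_unobs} with $(\dw_k,\dz_{k,j})$, estimate each sign-indefinite term via Cauchy--Schwarz and Young using the bounds $\rho_*,\drho_*,\eta_*\de/\mu_*^{1/2}$, absorb into the dissipation, and apply Gronwall. Your schematic accounting of which terms land at which power of $\mu_*$ in \eqref{cond:mu_dot_b} matches the paper's organization (the paper groups terms by their total order in $w,z,u,v$ rather than by your $R_{\text{adv}},R_{\text{mix}},R_{\text{mod}}$, but the substance is identical).

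One point you underplay that the paper treats carefully: the energy balance for the fast variables must be split into observed $(k,j)\in\II$ and unobserved $(k,j)\notin\II$ pieces. For the unobserved indices there is no nudging term $\mu_{k,j}\dz_{k,j}^2$ to absorb into, only the bare dissipation $d_{k,j}\dz_{k,j}^2$. The cross terms $\gam_j w_k\dw_k\dz_{k,j}$ and $\gam_j\du_k w_k\dz_{k,j}$ for $(k,j)\notin\II$ therefore have to be split with weight $d_*/4$ on the $\dz$ side rather than $\mu_*/8$, which is precisely the origin of the $4\|\gam\|^2\eta_*^2/d_*$ contribution in \eqref{def:eta_dot} and of the $1/d_*$ inside the $\mu_*$-coefficient of \eqref{cond:mu_dot_b}. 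Your sketch mentions absorption only into $\sum_k\mu_k\dw_k^2+\sum_{(k,j)\in\II}\mu_{k,j}\dz_{k,j}^2$; make sure your bookkeeping also routes the unobserved-$\dz$ terms through $d_*$, otherwise the constants in \eqref{def:eta_dot} will not close.
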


\begin{proof}
The energy balance for the time-derivative of the state error 
is given by
    \begin{align}
        \frac{1}2&\frac{d}{dt}
        \|w\|^2 +\sum_{k=0}^K(\mu_k+d_k+\De d_k)\dw_k^2\notag
        \\
        &= \sum_{k=0}^K\dw_{k-1}\dw_kw_{k+1}-\dw_{k-2}w_{k-1}\dw_k\notag
        \\
        &\quad+\sum_{k=0}^K\dw_{k-1}\dw_ku_{k+1}-u_{k-2}\dw_{k-1}\dw_k+w_{k-1}\dw_k\du_{k+1}-\du_{k-2}w_{k-1}\dw_k\notag
        \\
        &\quad+\sum_{k=0}^K\du_{k-1}\dw_kw_{k+1}-w_{k-2}\du_{k-1}\dw_k+u_{k-1}\dw_k\dw_{k+1}-\dw_{k-2}u_{k-1}\dw_k\notag
        \\
        &\quad+\sum_{k=0}^K\sum_{j=1}^J \gam_{k,j}(\dz_{k,j}w_k\dw_k+\dz_{k,j}u_k\dw_k +\dv_{k,j}w_k\dw_k+z_{k,j}\dw_k^2+z_{k,j}\du_k\dw_k +v_{k,j}\dw_k^2)\notag
        \\
        &\quad- \sum_{k=0}^K(\De d_k)\du_k\dw_k\label{eq:L96_state_error_dot_balance_slow}
        \\
        \frac{1}{2}&\frac{d}{dt}\left(\sum_{k=0}^K\sum_{j\in\JJ_k}\dz_{k,j}^2\right)+\sum_{k=0}^K\sum_{j\in\JJ_k}(\mu_{k,j}+d_{k,j}+\De d_{k,j})\dz_{k,j}^2\notag
        \\
        &=-\sum_{k=0}^K\sum_{j\in\JJ_k}(\De d_{k,j})\dv_{k,j}\dz_{k,j}+2\gam_jw_k\dw_k\dz_{k,j}+2\gam_j\du_kw_k\dz_{k,j}\label{eq:L96_state_error_dot_balance_fast_obs}
        \\
        \frac{1}{2}&\frac{d}{dt}\left(\sum_{k=0}^K\sum_{j\notin\JJ_k}\dz_{k,j}^2\right)+\sum_{k=0}^K\sum_{j\notin\JJ_k}d_{k,j}\dz_{k,j}^2\notag
        \\
        &=-2\sum_{k=0}^K\sum_{j\notin\JJ_k}\gam_jw_k\dw_k\dz_{k,j}+\gam_j\du_kw_k\dz_{k,j}\label{eq:L96_state_error_dot_balance_fast_unobs}
    \end{align}
    
Suppose that $t\geq T$. We treat the above sign-indefinite terms by repeatedly apply the Cauchy-Schwarz and Young's inequality, periodicity, and the bounds \eqref{est:L96_bound}, \eqref{est:L96_state_error_b}. We estimate the above in groups organized according to their total order in $w$, $z$, $u$, $v$. First we treat the slow variables:
    \begin{align}\notag
        \textrm{cubic in }w\leq 2\|w\|\|\dw\|^2\leq \frac{2\eta_*\|\De d\|}{\mu_*^{1/2}}\|\dw\|^2\leq \frac{2\eta_*\de}{\mu_*^{1/2}}\|\dw\|^2
    \end{align}
    \begin{align}
        \textrm{quadratic in $w$, linear $u$}&\leq 4\|\du\|\|w\|\|\dw\|++4\|u\|\|\dw\|^2\notag
        \\
        &\leq \frac{64\|\du\|^2}{\mu_*}\|w\|^2+\frac{\mu_*}{16}\|\dw\|^2+4\rho_*\|\dw\|^2\notag
        \\
        &\leq\frac{64\drho_*^2\eta_*^2}{\mu_*^2}\|\De d\|^2+\left(\frac{\mu_*}{16}+4\rho_*\right)\|\dw\|^2\notag
    \end{align}
    \begin{align}
        \textrm{quadratic in $w$, linear in $v$}&\leq \|\gam\|\left(\|\dv\|\|w\|\|\dw\|+\|v\|\|\dw\|^2\right)\notag
        \\
        &\leq \frac{4\|\gam\|^2\|\dv\|^2}{\mu_*}\|w\|^2+\frac{\mu_*}{16}\|\dw\|^2+\rho_*\|\gam\|\|\dw\|^2\notag
        \\
        &\leq \frac{4\|\gam\|^2\drho_*^2\eta_*^2}{\mu_*^2}\|\De d\|^2+\left(\frac{\mu_*}{16}+\rho_*\|\gam\|\right)\|\dw\|^2\notag
    \end{align}
    \begin{align}
        \textrm{quadratic in $w$, linear in $z$}&\leq \|\gam\|\|\dz\|\|w\|\|\dw\|+\|z\|\|\dw\|^2\notag
        \\
        &\leq \frac{4\|\gam\|^2}{\mu_*}\|w\|^2\|\dz\|^2+\frac{\mu_*}{16}\|\dw\|^2+\frac{\eta_*\|\De d\|}{\mu_*^{1/2}}\|\dw\|^2\notag
        \\
        &\leq \frac{4\|\gam\|^2\eta_*^2}{\mu_*^2}\|\dz\|^2+\left(\frac{\mu_*}{16}+\frac{\eta_*\de}{\mu_*^{1/2}}\right)\|\dw\|^2\notag
    \end{align}
    \begin{align}
        \textrm{linear in $w$, $z$, and $v$, $u$}&\leq \|\gam\|\left(\|\dz\|\|u\|+\|z\|\|\du\|\right)\|\dw\|\notag
        \\
        &\leq \frac{8\|\gam\|^2\|u\|^2}{\mu_*}\|\dz\|^2+\frac{8\|\gam\|^2\|\du\|^2}{\mu_*}\|z\|^2+\frac{\mu_*}{16}\|\dw\|^2\notag
        \\
        &\leq \frac{8\|\gam\|^2\rho_*^2}{\mu_*}\|\dz\|^2+\frac{8\|\gam\|^2\drho_*^2\eta_*^2}{\mu_*^2}\|\De d\|^2+\frac{\mu_*}{16}\|\dw\|^2\notag
    \end{align}
    \begin{align}
        \textrm{linear in $w$, $u$}&\leq \|\De d\|\|\du\|\|\dw\|\leq \frac{4\drho_*^2}{\mu_*}\|\De d\|^2+\frac{\mu_*}{16}\|\dw\|^2\notag
    \end{align}
Combining these estimates in \eqref{eq:L96_state_error_dot_balance_slow} with \eqref{cond:mu_dot_a},\eqref{cond:mu_dot_b} yields
    \begin{align}\label{est:w_dot}
        &\frac{1}2\frac{d}{dt}\|\dw\|^2+d_*\|\dw\|^2+\left(\frac{3\mu_*}{16}-\frac{3\eta_*\de}{\mu_*^{1/2}}-(4+\|\gam\|)\rho_*\right)\|\dw\|^2\notag
        \\
        &\leq \left(\frac{4(16+3\|\gam\|^2)\drho_*^2\eta_*^2}{\mu_*^2}+\frac{4\drho_*^2}{\mu_*}\right)\|\De d\|^2+\left(\frac{4\|\gam\|^2\eta_*^2}{\mu_*^2}+\frac{8\|\gam\|^2\rho_*^2}{\mu_*}\right)\|\dz\|^2
    \end{align}

Next, we treat the observed fast variables:
    \begin{align}
        \textrm{quadratic in $w$, linear in $z$}&\leq 2\|\gam\|\|w\|\|\dw\|\|\dz\|\leq \frac{4\|\gam\|^2}{\mu_*}\|w\|^2\|\dw\|^2+\frac{\mu_*}{8}\sum_{k=0}^K\sum_{j\in\JJ_k}\dz_{k,j}^2\notag
        \\
        &\leq \frac{8\|\gam\|^2\eta_*^2\de^2}{\mu_*^2}\|\dw\|^2+\frac{\mu_*}{8}\sum_{k=0}^K\sum_{j\in\JJ_k}\dz_{k,j}^2\notag.
    \end{align}
    \begin{align}
        \textrm{linear in $w$, $z$, $u$}&\leq 2\|\gam\|\|\du\|\|w\|\|\dz\|\leq \frac{8\|\gam\|^2\|\du\|^2}{\mu_*}\|w\|^2+\frac{\mu_*}{8}\sum_{k=0}^K\sum_{j\in\JJ_k}\dz_{k,j}^2\notag
        \\
        &\leq \frac{8\|\gam\|^2\drho_*^2\eta_*^2}{\mu_*^2}\|\De d\|^2+\frac{\mu_*}8\sum_{k=0}^K\sum_{j\in\JJ_k}\dz_{k,j}^2.\notag
    \end{align}
    \begin{align}
        \textrm{linear $z$, linear in $v$}&\leq\|\De d\|\|\dv\|\|\dz\|\leq \frac{2\drho_*^2}{\mu_*}\|\De d\|^2+\frac{\mu_*}{8}\sum_{k=0}^K\sum_{j\in\JJ_k}\dz_{k,j}^2.\notag
    \end{align}
Combining these estimates in \eqref{eq:L96_state_error_dot_balance_fast_obs} with \eqref{cond:mu_dot_a},\eqref{cond:mu_dot_b} yields
    \begin{align}\label{est:z_dot_obs}
        \frac{1}{2}&\frac{d}{dt}\left(\sum_{k=0}^K\sum_{j\in\JJ_k}\dz_{k,j}^2\right)+d_*\sum_{k=0}^K\sum_{j\in\JJ_k}\dz_{k,j}^2+\frac{5\mu_*}8\sum_{k=0}^K\sum_{j\in\JJ_k}\dz_{k,j}^2\notag
        \\
        &\leq \frac{8\|\gam\|^2\eta_*^2\de^2}{\mu_*^2}\|\dw\|^2+2\drho_*^2\left(\frac{4\|\gam\|^2\eta_*^2}{\mu_*^2}+\frac{1}{\mu_*}\right)\|\De d\|^2.
    \end{align}

Lastly, we treat the unobserved fast variables. Notice that the same terms as in the observed fast variables appears, except the ones with prefactor $\De d_{k,j}$ and $\mu_{k,j}$. We thus estimate the remaining terms as
    \begin{align}
        \textrm{quadratic in $w$, linear in $z$}&\leq \frac{4\|\gam\|^2\eta_*^2\de^2}{\mu_*d_*}\|\dw\|^2+\frac{d_*}{4}\sum_{k=0}^K\sum_{j\notin\JJ_k}\dz_{k,j}^2,\notag
    \end{align}
    \begin{align}
        \textrm{linear in $w$, $z$, $u$}\leq \frac{4\|\gam\|^2\drho_*^2\eta_*^2}{\mu_*d_*}\|\De d\|^2+\frac{d_*}4\sum_{k=0}^K\sum_{j\notin\JJ_k}\dz_{k,j}^2.\notag
    \end{align}
Combining these estimates in \eqref{eq:L96_state_error_dot_balance_fast_unobs} yields
    \begin{align}\label{est:z_dot_unobs}
        \frac{1}{2}&\frac{d}{dt}\left(\sum_{k=0}^K\sum_{j\notin\JJ_k}\dz_{k,j}^2\right)+\frac{d_*}2\sum_{k=0}^K\sum_{j\notin\JJ_k}\dz_{k,j}^2\leq\frac{4\|\gam\|^2\eta_*^2\de^2}{\mu_*d_*}\|\dw\|^2+\frac{4\|\gam\|^2\drho_*^2\eta_*^2}{\mu_*d_*}\|\De d\|^2.
    \end{align}
    
Finally, we combine \eqref{est:w_dot}, \eqref{est:z_dot_obs}, \eqref{est:z_dot_unobs} to obtain
    \begin{align}
        &\frac{1}2\frac{d}{dt}\left(\|w\|^2+\|z\|^2\right)+\frac{d_*}2\left(\|w\|^2+\|z\|^2\right)\notag\\
        &\qquad +\left(\frac{3\mu_*}{16}-\frac{3\eta_*\de}{\mu_*^{1/2}}-(4+\|\gam\|)\rho_*-\frac{4\|\gam\|^2\eta_*^2\de^2}{\mu_*}\left(\frac{4}{\mu_*}+\frac{1}{d_*}\right)\right)\|\dw\|^2\notag
        \\
        &\qquad +\left(\frac{5\mu_*}8-\frac{4\|\gam\|^2\eta_*^2}{\mu_*^2}-\frac{8\|\gam\|^2\rho_*^2}{\mu_*}\right)\sum_{k=0}^K\sum_{j\in\JJ_k}z_{k,j}^2\notag
        \\
        &\leq \frac{2\drho_*^2}{\mu_*}\left(\frac{2\eta_*^2\left(16+5\|\gam\|^2\right)}{\mu_*}+\frac{4\|\gam\|^2\eta_*^2}{d_*}+4\drho_*^2+1\right)\|\De d\|^2.\notag
    \end{align}
A final application of \eqref{cond:mu_dot_a} and \eqref{cond:mu_dot_b} yields \eqref{est:L96_state_error_dot_a}. The estimate \eqref{est:L96_state_error_dot_b} follows immediately.
\end{proof}

\subsection{Numerical results for the two-layer L96 model}
The full code for this model is included in the supplementary material as a Python-based jupyter notebook which can be easily adapted as desired.  The notebook includes a basic simulator for the two-layer L96 model, a simulator for the data assimilated model with all parameters correct, and a data assimilated model where the parameters are updated according to the derivations provided above.  The differential equations are solved using a Runge-Kutta 45 scheme in Scipy's \verb|solve_ivp| function with the standard default settings.

Using the default parameters and dimensions specified in \cite{Chen_Majda_2018}, we see the development of highly nonlinear dynamics as indicated in Fig. \ref{fig:L96_dynamics}.  This is specified precisely by selecting $K=40$ slow variables $u_k$ with $J=5$ ($v_{k,j}$) fast variables for each of the $u_k$ resulting in a $240$ dimensional system.  The forcing is set to the universally constant value $F=5$, and the true dissipation and interaction coefficients are set to
\begin{align*}
    \tilde{d}_k &= 1.0 + 0.7\cos [2\pi(k+1)/J],\\
    d_{k,j} &= d_j,\quad d_1 = 0.2,~d_2 = 0.5,~d_3 = 1,~d_4=2,~d_5=5,\\
    \gamma_{k,j} &= \gamma_j = 0.1+0.25\cos(2\pi(k+1)/J).
\end{align*}
In each simulation, with and without nudging and/or parameter estimation, the initial conditions for both systems are drawn from the standard uniform distribution.

When the model is completely accurate (all parameters are known exactly) then the nudged system is able to identify all unobserved portions of the state (so long as the observed part of the state is in the slow $u_k$ variables) as indicated in Fig. \ref{fig:L96_AOT_error}.  The fast and slow scales interact noticeably as demonstrated in the supplemental notebook.  Stronger nonlinear effects can be noticed by increasing the strength of the forcing $F$, decreasing the dissipation coefficients, or varying any of the other parameters described above.  We do not report on any of these additional simulations here as they do not modify the primary conclusions of this article, but note that the reader is welcome to modify the supplemental notebook and investigate these features for themselves.

\begin{figure}
    \centering
    \includegraphics{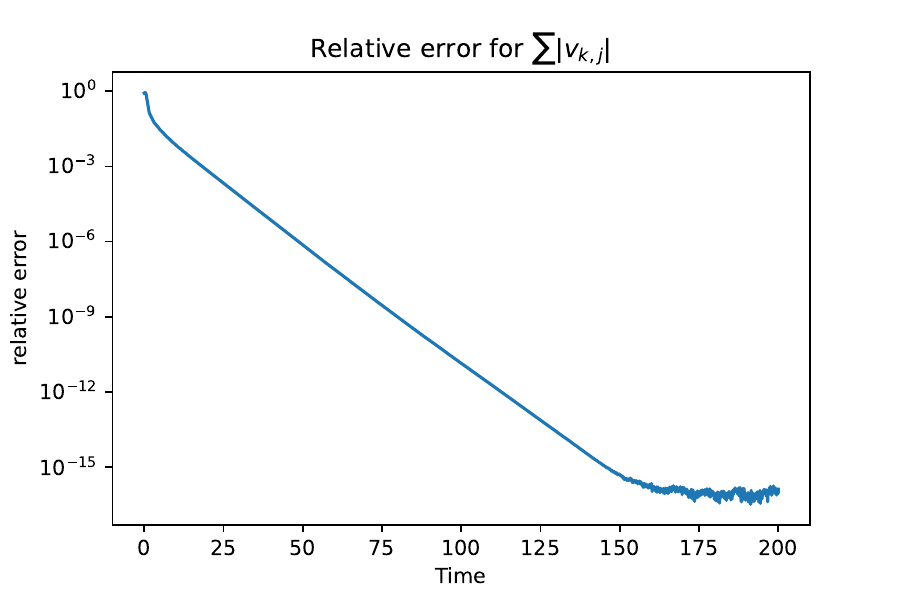}
    \caption{A plot of the relative error for the fast scale dynamics.  This is the relative norm for 20 of fast variables which indicates that the convergence of the nudged system to the true state is very near machine epsilon.}
    \label{fig:L96_AOT_error}
\end{figure}

To investigate the ability of the presented algorithm to identify unknown parameters, we consider the case where any subset of the $\td_k$ are unknown with the corresponding $u_k$ being observed.  In every case, so long as $u_k$ is observed, then the corresponding $\td_k$ can be approximated within a reasonable number of iterations of the algorithm.  This is irrespective of which and how many of the $u_k$ are observed or how many of the $\td_k$ are unknown.  An example is depicted in Fig. \ref{fig:L96_param_error} where the relative error in the parameters is plotted where the first 20 $\td_k$ are the unknown parameters.  Variations in the various hyper-parameters (length of time between parameter updates, value of the nudging coefficients $\mu$, and the initial guess for the unknown parameter) naturally lead to a different convergence rate, but the changes are not unexpected and can be explored in detail in the supplemental notebook.

\begin{figure}
    \centering
    \includegraphics{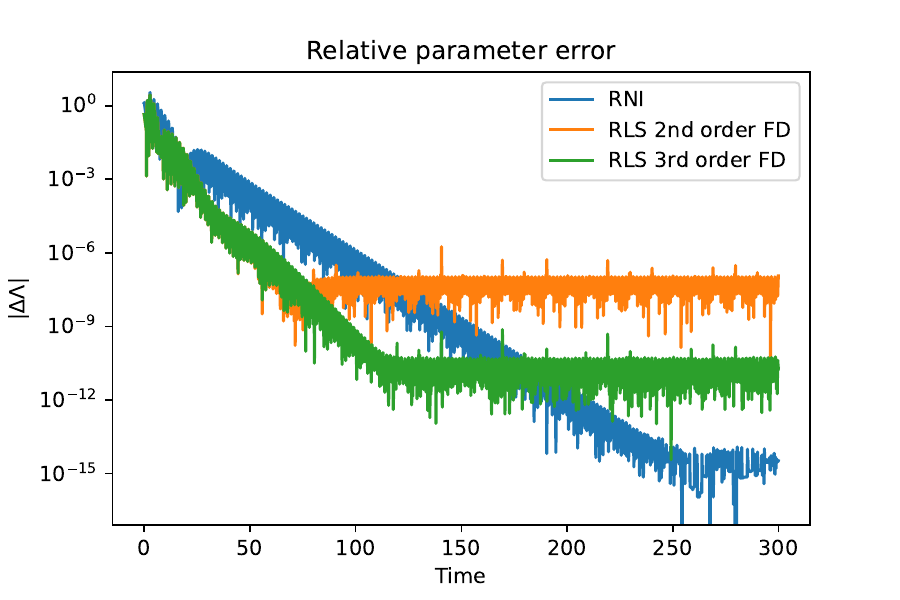}
    \caption{The relative parameter error $\|\Delta \Lam\|/\|\Lam\|$ as a function of time for the two-layer L96 model.  This is when the first 20 $\overline{d}_k$ are unknown, with both algorithms updating the parameter every $0.1$ time units with a fixed nudging coefficient of $\mu=50$.  Note that the error does not converge to machine epsilon for the RLS algorithm.  This is due to the choice of time step and is restricted by the need to numerically approximate the temporal derivative $I_h \frac{d}{dt}u$.  The final level of this error term can be adequately adjusted by choice of the enforced time step in the solver.  The final error from RNI is within an order of magnitude of machine epsilon, indicating adquate convergence to the exact value of the true parameters.}
    \label{fig:L96_param_error}
\end{figure}

As noted in \cite{pachev2022concurrent} the primary source of error in the RNI parameter update is the numerical approximation of the observable temporal derivative.  To identify this effect, we have included the option for three different backward finite differencing methods to approximate $\frac{du_k}{dt}$ which can be toggled between to gauge the relative error each produces.  These yield a corresponding first, second, and third order approximation.  Results from the 2nd and 3rd order approximation are shown in Fig. \ref{fig:L96_param_error}.  The order of the method doesn't seem to affect the convergence rate, but the final error value is directly due to this artifact. The same effect can be examined by forcing the selected solver (Scipy's built-in {solve\_ivp} in this case) to use a fixed time step, and then lowering the time step until the parameter error reaches machine epsilon.  Any and all of these options are open to the reader, and are straightforward to adjust in the provided supplemental notebook.

We note that the convergence rate for both RNI and RLS appears to be dictated by the choice of $\mu$, the smallest eigenvalue of the diagonal matrix $M$ (in these numerical simulations, $M = \mu I$ is a constant multiple of the identity matrix).  Although RNI does achieve an overall small error, its rate of convergence appears to be slightly worse than RLS (for either choice of the finite difference scheme). We also note that although the rigorous results stated above rely on the observation of all slow variables $u_k$, this is not necessary numerically, but only those slow variables whose corresponding dissipation coefficients $\tilde{d}_k$ are unknown, need to be observed.  This highlights the difference between the pessimistic estimates provided by the analysis, and the practical implementation of the algorithms described here.

\section{Application to the convection problem}\label{sect:RBC}
To investigate the utility of and compare and contrast both of the parameter recovery algorithms discussed above in the infinite-diemsional setting, we consider the paradigm of Rayleigh-B\'enard convection (RBC, see \cite{ahlers2009heat,chilla2012new}).  RBC is a classical example of chaotic (even turbulent) behavior in a physically simple setting, derived by considering the Boussinesq approximation of a fluid heated from below and cooled from above.  Lord Rayleigh first considered the mathematical setting of this problem by investigating the stability of the quiescent conductive state (absence of fluid motion) in \cite{rayleigh1916lix}.  Since that time, RBC has been a canonical problem not only for physicists to explore the nature of convective turbulence, but also as a testbed for pattern formation \cite{cross1993pattern}.  Of particular relevance to this current study, continuous data assimilation has been applied to this system in several different settings \cite{FarhatJollyTiti2015,FarhatLunasinTiti2016b,FarhatJohnstonJollyTiti2018} including when the full parameters of the system are unknown \cite{FarhatGlattHoltzMartinezMcQuarrieWhitehead2020}.  We adapt both the RNI and RLS algorithms to RBC to recover the two non-dimensional parameters of interest for the sytem: the Rayleigh number $\Ra$ and the Prandtl number $\Pr$.

For ease of implementation, and to allow for a more thorough exploration of parameter space, we restrict our attention to two spatial dimensions, and hence are interested in the system of equations given by
\begin{align*}
    \frac{1}{\Pr}\left(\frac{\partial \bv{u}}{\partial t} + \bv{u}\cdot \nabla \bv{u} + \nabla p\right) &= \Delta \bv{u} + \Ra \bv{e}_2 \theta, \quad \nabla \cdot \bv{u} = 0,\\
    \frac{\partial \theta}{\partial t} + \bv{u}\cdot \nabla \theta &= \Delta \theta.
\end{align*}
In this setting the velocity field $\bv{u} = (u,v)^T$ is two dimensional, the Prandtl number $\Pr$ is the ratio of the kinematic viscosity to the thermal diffusivity, and the Rayleigh number is a non-dimensional measure of the strength of the thermal forcing.  In all that follows, since we are considering the two-dimensional system, we will rewrite the evolution equations in terms of the vorticity, i.e.
\begin{align} \label{eq:vorticity_RB_system} \begin{split}
\frac{\p \zeta}{\p t} + \bv{u} \cdot \del \zeta 
&= \Pr \Delta \zeta + \Pr \Ra \theta_x, \\
\frac{\p \theta}{\p t} + \bv{u} \cdot \del \theta &= \Delta \theta,
\end{split} \end{align}
where $\zeta = \nabla \times \bf{u}$ is the vorticity of the system. For a more extended discussion on this system see \cite{murri2022}.

\subsection{Derivation of the RNI Algorithm for Estimating Multiple Parameters}\label{sec:CHL_multiparameter_derivation}

The RNI algorithm can be used for estimating multiple parameters but only if the system contains multiple equations, and each parameter of interest is found in exactly one equation. The derivation proceeds in a similar manner; each equation is used to derive a separate update. Note that in this case it is necessary to use a different nondimensionalization of the RBC system which allows $\Pr$ and $\Ra$ to be located in separate equations. In order to identify both parameters, we also necessarily require a nudging term on both the temperature and velocity equations. The system in this setting is given as follows: 
\begin{align} \label{eq:CHL_Pr_Ra_system} \begin{split}
\frac{\p \zeta}{\p t} + \bv{u} \cdot \del \zeta 
&= \Delta \zeta + \Ra \theta_x, \\
\frac{\p \theta}{\p t} + \bv{u} \cdot \del \theta &= \text{Pr}^{-1} \Delta \theta, \\
\frac{\p \tilde{\zeta}}{\p t} + \bv{\tu} \cdot \del \tilde{\zeta}
&= \Delta \tilde{\zeta} + \widetilde{\Ra} \, \tilde{\theta}_x - \mu_1 I_h\left(\tilde{\zeta} - \zeta\right), \\
\frac{\p \tilde{\theta}}{\p t} + \bv{\tu} \cdot \del \tilde{\theta} &= \widetilde{\Pr}^{-1} \Delta \tilde{\theta} - \mu_2 I_h \left( \tilde{\theta} - \theta \right)
\end{split} .
\end{align}
Following the general derivation of RNI as outlined above, we define $\bv{w} = \bv{\tu} - \bv{u}$, $z = \tilde{\zeta}- \zeta$, and $\eta = \tilde{\theta} - \theta$ and note that after waiting a sufficiently long time $t=t_1$, then the parameter update is given by: 
\[
\widetilde{\Ra}^{n+1} = \widetilde{\Ra}^n - \mu_1 \frac{\|I_h z\|^2}{\innerp{I_h \theta_x}{I_h z}} \Bigg \vert_{t=t_1}
\]
and
\[
\left(\widetilde{\Pr}^{-1}\right)^{n+1} = \left(\widetilde{\Pr}^{-1}\right)^n - \mu_2 \frac{\|I_h\eta\|^2}{\langle I_h\eta, I_h\Delta \theta\rangle}.
\]

However, a more refined RNI update can be provided if, rather than omitting the nonlinear terms entirely, we approximate them by the nonlinear effects on the observed state \cite{murri2022}, that is, we follow the approximation
\begin{equation}\notag
    I_h F(u) \approx I_h F(I_hu)
\end{equation}
in the general setting.  For the Rayleigh-B\'enard problem this leads to an update formula (which we refer to in this section as the $\mathrm{RNI^+}$ algorihm) of the form:
\begin{align*}
    \widetilde{\Ra}^{n+1} &= \widetilde{\Ra}^n + \frac{\left\langle I_h z, I_h(\tu) \cdot I_h(\nabla z) + I_hz\cdot I_h(\nabla \eta) - I_h \Delta w - \widetilde{\Ra}I_h \tilde{\eta}_x + \mu_1 I_h z\right\rangle}{\left\langle I_h \theta_x,I_h z\right\rangle}\\
    \left(\widetilde{\Pr}^{-1}\right)^{n+1} &= \left(\widetilde{\Pr}^{-1}\right)^n + \frac{\left\langle I_h\eta, I_h\tu\cdot I_h(\nabla \eta) + I_h w \cdot I_h (\nabla \theta) - \left(\widetilde{\Pr}^{-1}\right)^nI_h\Delta \eta + \mu_2 I_h \eta\right\rangle}{\left\langle I_h\eta,I_h \Delta \theta\right\rangle}.
\end{align*}

Interestingly, we report that the standard RNI algorithm did not achieve convergence down to machine precision in estimating both $\Pr$ and $\Ra$ in this setting, whereas the $\mathrm{RNI^+}$ algorithm described above did.  Since the RNI algorithm was derived under the assumption that the initial parameter guess is already sufficiently close to the true parameter (see discussion following \eqref{eq:sensitivity} in \cref{sect:RNI}), the observed failure of the RNI algorithm to converge, at least in the context of the RBC system, may be an indication that the algorithmic parameter regimes that admit convergence may be excessively constrictive, and therefore, difficult to enforce in practice. Furthermore, Theorem \cref{thm:RNI} requires the nudging matrix $M$ to have its smallest eigenvalue $\mu$ bounded below by a constant proportional to the initial parameter error. In the context of a numerical simulation with a certain computational budget, the magnitude of $\mu$ is, in practice, capped by the CFL condition on the time step.  

From this point of view, our numerical results for the $\mathrm{RNI^+}$ algorithm (see \cref{sect:simulations} below) may provide further relaxation to these conditions that are more conducive to its practical implementation. On the other hand, the failure of convergence may be an indication that not all terms that were dropped in the derivation are actually justifiable in this context. Indeed, the observed failure of the RNI algorithm to converge in the context of the RBC system is not in contradiction with \cref{thm:RNI}, which is only valid, in general, for finite-dimensional dissipative systems.

Regardless, this phenomenon deserves further investigation, particularly the extent to which the RNI framework can be generalized to the PDE setting. Indeed, in \cite{carlson2020parameter}, the standard RNI algorithm was applied to recover the kinematic viscosity in the 2D NSE, which suggests that the failure of RNI to converge may be subtle. This issue will be studied in a more systematic fashion in a follow-up work.

\subsection{Derivation of RLS for estimating both parameters in RBC}

Unlike RNI, the RLS algorithm is capable of estimating $\Pr$ and $\Ra$ simultaneously while nudging the vorticity (or velocity) alone. This seems to suggest that having direct access to information on the time-derivative of the data, which RLS makes explicit use of, holds significant advantages. 

Starting with this setup, i.e., not nudging the temperature directly, let $z = \tilde{\zeta}-\zeta$ be the error in vorticity, it follows that 
\[
\frac{\p z}{\p t} 
= \frac{\p \tilde{\zeta}}{\p t} - \frac{\p \zeta}{\p t}
= - \bv{\tu} \cdot \del \tilde{\zeta}
+ \widetilde{\Pr} \Delta \tilde{\zeta}
+ \widetilde{\Pr} \widetilde{\Ra} \tilde{\theta}_x 
- \mu I_h\left( \tilde{\zeta} - \zeta \right) - \zeta_t,
\]
where $\zeta_t := \p \zeta/\p t$. For the numercial simulations performed here, $I_h$ will denote Galerkin truncation to up wavenumbers of size $N=1/h$. Subsequently, $I_h$ is linear, commutes with derivatives, and is idempotent as $I_h$. Upon simplifying, then applying the observation operator $I_h$ throughout, it follows that 
\[
I_h z_t 
= \widetilde{\Pr} I_h(\Delta \tilde{\zeta})
+ \widetilde{\Pr} \widetilde{\Ra} I_h(\tilde{\theta}_x) -  \left[ I_h(\bv{\tu} \cdot \del \tilde{\zeta})
 + I_h \zeta_t \right] - \mu I_h z.
\]
At this point, the goal is to choose $\widetilde{\Pr}$ and $\widetilde{\Ra}$ to enforce 
\begin{align}\label{eq:RLS_RBC_condition}
\widetilde{\Pr} I_h(\Delta \tilde{\zeta})
+ \widetilde{\Pr} \widetilde{\Ra} I_h(\tilde{\theta}_x) -  \left[ I_h(\bv{\tu} \cdot \del \tilde{\zeta})
 + I_h\zeta_t \right] = 0.
\end{align}
to the extent possible. Following the previous discussion on the RNI algorithm, we settle on the following conditions which $\widetilde{\Pr}$ and $\widetilde{\Ra}$ must be chosen to satisfy:
\begin{align*}
\widetilde{\Pr} \|I_h(\Delta \tilde{\zeta})\|^2
+ \widetilde{\Pr} \widetilde{\Ra} \innerp{I_h(\tilde{\theta}_x)}{I_h(\Delta \tilde{\zeta})} &= \innerp{I_h(\bv{\tu} \cdot \del \tilde{\zeta})
 + I_h\zeta_t}{I_h(\Delta \tilde{\zeta})} \\
\widetilde{\Pr} \innerp{I_h(\Delta \tilde{\zeta})}{I_h(\tilde{\theta}_x)}
+ \widetilde{\Pr} \widetilde{\Ra} \|I_h(\tilde{\theta}_x)\|^2 &= \innerp{I_h(\bv{\tu} \cdot \del \tilde{\zeta})
 + I_h\zeta_t}{I_h(\tilde{\theta}_x)}
\end{align*}
which can be written in matrix form:
\begin{align}\label{eq:RLS_matrix_form}
\begin{pmatrix}
\|I_h(\Delta \tilde{\zeta})\|^2 & \innerp{I_h(\tilde{\theta}_x)}{I_h(\Delta \tilde{\zeta})} \\
\innerp{I_h(\Delta \tilde{\zeta})}{I_h(\tilde{\theta}_x)} & 
\|I_h(\tilde{\theta}_x)\|^2
\end{pmatrix}
\begin{pmatrix}
\widetilde{\Pr} \\ \widetilde{\Pr}\widetilde{\Ra}
\end{pmatrix}
=
\begin{pmatrix}
\innerp{I_h(\bv{\tu} \cdot \del \tilde{\zeta})
 + I_h\zeta_t}{I_h(\Delta \tilde{\zeta})} \\
\innerp{I_h(\bv{\tu} \cdot \del \tilde{\zeta})
 + I_h\zeta_t}{I_h(\tilde{\theta}_x)}
\end{pmatrix}.
\end{align}
Letting $A$ be the matrix on the left-hand side of \eqref{eq:RLS_matrix_form} and $b$ be the vector on the right-hand side of \eqref{eq:RLS_matrix_form}, the parameter update takes the form
\[
\widetilde{\Pr}\vert_{t > t_n} = (A^{-1} b)_1 \vert_{t=t_n}, \qquad \widetilde{\Ra}\vert_{t > t_n}  = \frac{(A^{-1} b)_2}{(A^{-1}b)_1}\Big \vert_{t=t_n}
\]
as long as the matrix $A$ is nonsingular ($\det A \neq 0$ is the nondegeneracy condition in this case), and as long as $(A^{-1}b)_1 \neq 0$. The latter condition only appears as a result of solving for $\Ra$-update from the $\Pr$- and $\Pr\Ra$-updates. The presence of the condition is nevertheless consistent since the Rayleigh-B\'enard system becomes ill-posed if the Prandtl number is close to zero. 

\subsection{Numerical Simulation and Comparison of Algorithms}\label{sect:simulations}

Simulations were performed using  Dedalus version 2.0, which is a ``flexible, open-source, parallelized computational framework for solving general partial differential equations using spectral methods" \cite{burns2020dedalus}. Dedalus features an object-oriented design, symbolic manipulation through a computer algebra system, and reasonable parallelization. More information about the Dedalus codebase is available at \url{http:dedalus-project.org}.

All simulations were performed on a $384 \times 192$ grid, with points having equally spaced horizontal coordinates (Fourier series in the horizontal) and Chebyshev collocation points as vertical coordinates. The domain was $[0,4] \times [0,1]$, meaning that the horizontal dimension was four times as long as the vertical.  All states were initialized using an initial state with $\Ra = 10^5$ and $\Pr = 1$ that had been run out for a sufficiently long time to ensure that all statistics had constant time averages. The initial conditions used for the true system are shown in Figure \ref{fig:initial_state} (following \cite{murri2022}). The assimilating (nudged) system was initialized at a low-mode projection of these states (following \cite{FarhatGlattHoltzMartinezMcQuarrieWhitehead2020}).

\begin{figure}
     \centering
     \begin{subfigure}
         \centering
         \includegraphics[width=\textwidth]{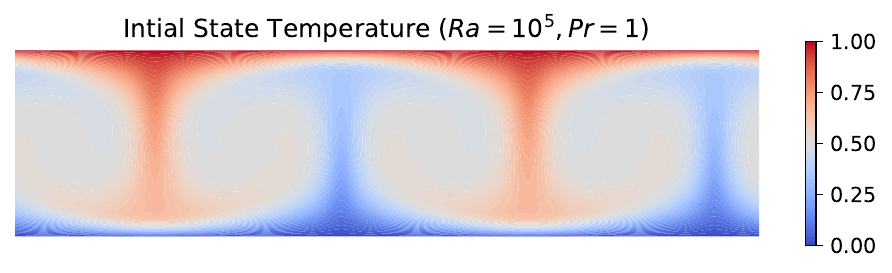}
     \end{subfigure}
     \begin{subfigure}
         \centering
         \includegraphics[width=\textwidth]{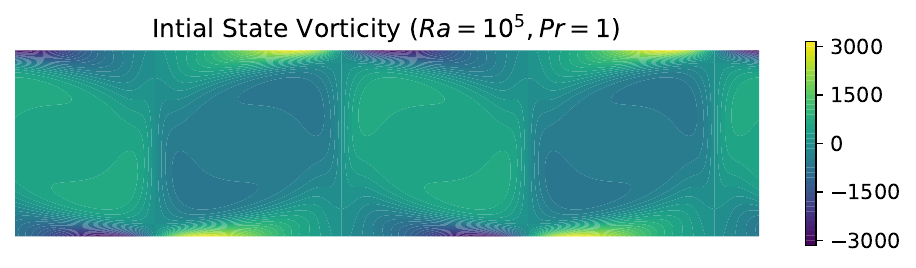}
     \end{subfigure}
     \begin{subfigure}
         \centering
         \includegraphics[width=\textwidth]{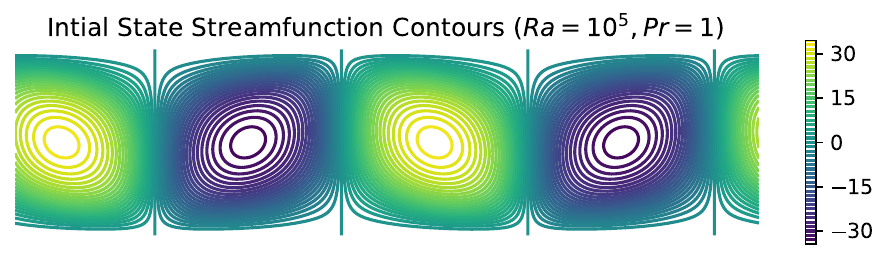}
     \end{subfigure}
\caption{The convective state used as initial condition for computational experiments.  The top is a snapshot of the temperature field, the middle is the vorticity, and the bottom demonstrates contours of the streamfunction.}
\label{fig:initial_state}
\end{figure}

Figure \ref{fig:RB_comparison} compares the performance of $\mathrm{RNI^+}$ and RLS multiparameter algorithms when utilized on the same initial conditions. Both are started at the same initial state, while their assimilating systems are initialized to a low-mode (the lowest 4 modes in each direction) projection of that state. In each case, the true system has $\Pr = 1.0$ and $\Ra = 10^5$, while the assimilating system is initialized with $\widetilde{\Pr} = 1.1$ and $\widetilde{\Ra} = 9 \times 10^4$. In these simulations, the algorithms used a relaxation time (interval between parameter updates) of $0.05$. They used vorticity nudging with $\mu = 8000$ and temperature nudging, also with $\mu_T = 8000$. Note that $\mathrm{RNI^+}$ will fail without temperature nudging whereas RLS will still result in adequate convergence of both the parameters and state even with $\mu_T = 0$, however for this comparison we nudge both the vorticity and temperature.  We also note that for these default parameters, the basic RNI algorithm failed to provide the correct parameter update and led to linearly increasing error in both the state and the parameter estimates.

In Figure \ref{fig:RB_comparison} it is clear that the $\mathrm{RNI^+}$ algorithm converges more quickly both in the state and the parameters; the error from the $\mathrm{RNI^+}$ algorithm seems to bottom out at just after time $t = 0.4$. The RLS algorithm converges more slowly in comparison. However both algorithms appear to reach a final error that is roughly similar. This can be seen in Table \ref{table:PMWvsCHLerror}.

\begin{table}
\centering
\begin{tabular}{c|ccc}
Algorithm & Average State Error & Average Relative $\Ra$ Error & Average Relative $\Pr$ Error \\
\hline
RNI & $2.77 \times 10^{-13}$ & $4.96 \times 10^{-13}$ & $2.65 \times 10^{-11}$ \\
RLS & $1.73 \times 10^{-12}$ & $1.05 \times 10^{-10}$ & $2.66 \times 10^{-12}$
\end{tabular}
\caption{Averaged errors over the interval $t \in [0.8, 1]$ for the RLS and RNI algorithms where the true parameters are $\Ra = 10^5$ and $\Pr = 1$ with a nudging coefficient of $\mu = 8,000$ for both aglorithms.}
\label{table:PMWvsCHLerror}
\end{table}

Table \ref{table:PMWvsCHLerror} shows that while both algorithms converge to a reasonably low state error, the relative parameter error remains larger (at least for the time interval considered here).  Further investigation into this effect for more turbulent settings (higher Rayleigh numbers) will be performed in a later study.

\begin{figure}
    \centering
    \includegraphics[width=\linewidth]{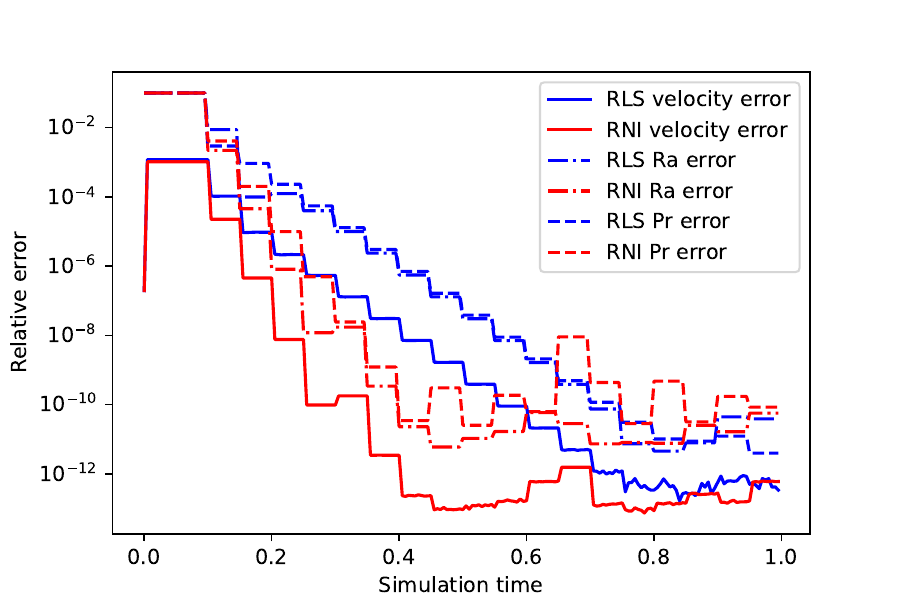}
    \caption{Convergence of the state, and both parameters for both algorithms with a true Rayleigh number of $10^5$ and Prandtl number of $1$ with a nudging coefficient $\mu = 8,000$.}
    \label{fig:RB_comparison}
\end{figure}

The specific choices of the relaxation coefficient $\mu$, interval over which each discrete update is implemented, and all other hyper-parameters do not appear to be critical to the success of these algorithms, at least for $\mathrm{RNI^+}$.  The choice of the relaxation time scale, the value of $\mu$, as well as the initial guess of the unknown parameters, can and does effect the convergence of each algorithm, but slight changes in these hyper-parameters does not appear to drastically alter the results.  We do not fully investigate these hyperparameters here but relegate such an investigation to a later study wherein we will also push the simulations to higher values of the Rayleigh number and hence a more turbulent regime.  Rigorous justification for this setting will also be provided in that future work, relying on rigorous bounds on solutions of the underlying partial differential equations, a level of rigorous analysis that is beyond the scope of the current work.

\section{Conclusions}\label{sect:conc}

This work provides new principled derivations for two algorithms (RNI and RLS) for simultaneous data assimilation and parameter recovery in dissipative dynamical systems. We provide rigorous criteria for convergence of both of these algorithms in the finite dimensional setting, and display the utility of these algorithms on recovery of multiple parameters on the 2-layer Lorenz '96 system. We further numerically investigate the performance of the $\mathrm{RNI^+}$ and RLS algorithms on the 2D Rayleigh-B\'enard convective system and demonstrate that even in this case the unknown parameters and state variables are recovered with reliable accuracy. We note that the RNI algorithm requires modification (which we call $\mathrm{RNI^+}$) to perform well on Rayleigh-Benard convection in the regimes that we tested. 

While the rigorous analysis presented here for the RLS and RNI algorithms is only valid in the finite-dimensional setting, we have demonstrated that numerical approximations to an infinite dimensional PDE will still follow the same results.  In an upcoming work, we will demonstrate that this is generically the case by extending \cref{thm:RLS} to Rayleigh-B\'enard convection, requiring some novel bounds on the underlying PDE as well as the analysis performed above for \cref{thm:RLS} and \cref{thm:RNI}.  Unlike the finite-dimensional case, each PDE must be treated uniquely and so there is no general theorem for the infinite dimensional case.  This future work will include a far more in depth study of the numerical implementation of these algorithms to RBC, including higher resolution simulations in more turbulent regimes (higher Rayleigh numbers).

\section*{Data availability statement}

All data that support the findings of this study are included in the Supplemental Materials to this paper.

\ack{JPW was partially supported by NSF grant DMS-2206762, as well as the Simons Foundation travel grant under 586788. V.R.M. was in part supported by the National Science Foundation through DMS 2213363 and DMS 2206491, as well as the Dolciani Halloran Foundation.}

\section*{References}


\end{document}